\newtheorem{thm}{Theorem}[section]
\newtheorem{lem}[thm]{Lemma}
\newtheorem{cor}[thm]{Corollary}
\newtheorem{prop}[thm]{Proposition}
\newtheorem{rmk}[thm]{Remark}
\newtheorem{thm-con}[thm]{Theorem-Conjecture}
\numberwithin{equation}{section}
\theoremstyle{definition}
\newcommand{\f}{\Bbb F}
\begin{document}

\title[Reversed Dickson polynomials of the $(k+1)$-th kind over finite fields, II]{Reversed Dickson polynomials of the $(k+1)$-th kind over Finite Fields, II}

\author[Neranga Fernando]{Neranga Fernando}
\address{Department of Mathematics and Computer Science,
College of the Holy Cross, Worcester, MA 01610, USA}
\email{nfernand@holycross.edu}

\date{(date1), and in revised form (date2).}
\subjclass[2010]{11T55, 11T06}
\keywords{Finite fields, Reversed Dickson polynomials, Permutation polynomials}

\begin{abstract}
Let $p$ be an odd prime. In this paper, we study the permutation behaviour of the reversed Dickson polynomials of the $(k+1)$-th kind $D_{n,k}(1,x)$ when $n=p^{l_1}+3$, $n=p^{l_1}+p^{l_2}+p^{l_3}$, and $n=p^{l_1}+p^{l_2}+p^{l_3}+p^{l_4}$, where $l_1, l_2$, $l_3$, and  $l_4$ are non-negative integers. A generalization to $n=p^{l_1}+p^{l_2}+\cdots +p^{l_i}$ is also shown. We find some conditions under which $D_{n,k}(1,x)$ is not a permutation polynomial over finite fields for certain values of $n$ and $k$. We also present a generalization of a recent result regarding $D_{p^l-1,1}(1,x)$ and present some algebraic and arithmetic properties of $D_{n,k}(1,x)$.
\end{abstract}

\maketitle

\tableofcontents


\section{Introduction}

Let $p$ be an odd prime and $q=p^e$, where $e$ is a positive integer. Let $\Bbb F_q$ be the finite field with $q$ elements. A polynomial $f \in \Bbb F_q[{\tt x}]$ is called a \textit{permutation polynomial} (PP) of $\Bbb F_q$ if the associated mapping $x\mapsto f(x)$ from $\f_q$ to $\f_q$ is a permutation of $\Bbb F_q$. Permutation polynomials over finite fields have important applications in coding theory, cryptography, finite geometry, combinatorics, and computer science, among other fields. 

In the study of permutation polynomials, Dickson polynomials have played a major role. The $n$-th Dickson polynomial of the first kind $D_n(x,a)$ is defined by
\[
D_{n}(x,a) = \sum_{i=0}^{\lfloor\frac n2\rfloor}\frac{n}{n-i}\dbinom{n-i}{i}(-a)^{i}x^{n-2i},
\]
where $a\in \f_q$ is a parameter. 

The permutation property of the Dickson polynomials of the first kind is completely known. When $a=0$, $D_{n}(x,a) = x^n$, which is a PP over $\Bbb F_q$ if and only if $(n,q-1)=1$. When $0\neq a \in \mathbb{F}_{q}$, $D_{n}(x,a)$ is a PP over $\Bbb F_q$ if and only if $(n,q^2-1)=1$; see \cite[Theorem ~7.16]{Lidl-Niederreiter-97} or \cite[Theorem ~3.2]{Lidl-Mullen-Turnwald-1993}.

The $n$-th reversed Dickson polynomial of the first kind $D_{n}(a,x)$ was first introduced by Hou, Mullen, Sellers and Yucas in \cite{Hou-Mullen-Sellers-Yucas-FFA-2009} by reversing the roles of the variable and the parameter in the $n$-th Dickson polynomial of the first kind $D_{n}(x,a)$. The $n$-th reversed Dickson polynomial of the first kind $D_n(a,x)$ is defined by

\[
D_{n}(a,x) = \sum_{i=0}^{\lfloor\frac n2\rfloor}\frac{n}{n-i}\dbinom{n-i}{i}(-x)^{i}a^{n-2i},
\]

where $a\in \f_q$ is a parameter. 

In 2012, Wang and Yucas introduced the $n$-th reversed Dickson polynomial of the $(k+1)$-th kind (\cite{Wang-Yucas-FFA-2012}). Let $k$ be an integer such that $0\leq k\leq p-1$. For $a\in \f_q$, the $n$-th reversed Dickson polynomial of the $(k+1)$-th kind $D_{n,k}(a,x)$ is defined by
\begin{equation}\label{E1.1}
D_{n,k}(a,x) = \sum_{i=0}^{\lfloor\frac n2\rfloor}\frac{n-ki}{n-i}\dbinom{n-i}{i}(-x)^{i}a^{n-2i},
\end{equation}

\noindent and $D_{0,k}(a,x)=2-k$.

In \cite{Hong-Qin-Zhao-FFA-2016-2}, Hong, Qin, and Zhao studied reversed Dickson polynomials of the second kind and presented several necessary conditions for the reversed Dickson polynomial of the second kind to be a permutation of $\f_q$. In \cite{Fernando-2016}, the author of the present paper explored reversed Dickson polynomials of the third kind and found the necessary conditions for them to be a permutation on $\f_q$. 

In \cite{Fernando-2016-3}, the author of this paper unified and generalized several recently discovered results on reversed Dickson polynomials over finite fields. It was shown in \cite{Fernando-2016-3} that to discuss the permutation behaviour of reversed Dickson polynomials of the $(k+1)$-th kind, one only has to consider $a=1$. We completely explained the permutation behaviour of the reversed Dickson polynomials of the $(k+1)$-th kind $D_{n,k}(1,x)$ over finite fields when $n=p^l$, $n=p^l+1$, and $n=p^l+2$, where $p$ is an odd prime and $l\geq 0$ is an integer, in \cite{Fernando-2016-3} and \cite{Fernando-2016-4}. In the present paper, we explore the permutation behaviour of $D_{n,k}(1,x)$ over finite fields when $n$ is a sum of odd prime powers. 

The observations in \cite{Fernando-2016-3} and \cite{Fernando-2016-4} led to the question ``when is $D_{n,k}(1,x)$ a PP of $\f_{p^e}$ when $n=p^l+3$?''. The case $n=p^l+3$ led us to consider the cases $n=p^{l_1}+p^{l_2}+p^{l_3}$ and $n=p^{l_1}+p^{l_2}+p^{l_3}+p^{l_4}$ which eventually led to a generalization, $n=p^{l_1}+p^{l_2}+\cdots +p^{l_i}$. As a consequence, we explain the permutation behaviour of $D_{p^{l_1}+p^{l_2},k}(1,x)$.

We would like to point out that the generalization to $n=p^{l_1}+p^{l_2}+\cdots +p^{l_i}$ given in Section~\ref{S6} paves the way to many combinatorial problems. It opens the door to find explicit polynomials that have the same permutation behaviour as $D_{n,k}(1,x)$ when $n=rp^l+s$, where $r\geq 1$ and $l, s\geq 0$ are integers. This allows us to consider two special cases, $n=p^l+s$ and $n=rp^l$. As a consequence, we find some conditions under which $D_{n,k}(1,x)$ does not permute $\f_{p^e}$. We believe that our results would be helpful for anyone who is interested in investigating the permutation behaviour of reversed Dickson polynomials of the $(k+1)$-th kind over finite fields.

The paper is organized as follows. 

In Section 2, we present some preliminaries. We explore the permutation behaviour of $D_{n,k}(1,x)$ when $n=p^l+3$, $n=p^{l_1}+p^{l_2}+p^{l_3}$, and $n=p^{l_1}+p^{l_2}+p^{l_3}+p^{l_4}$ in Sections 3, 4 and 5, respectively. 

In Section 6, we present the generalization to $n=p^{l_1}+p^{l_2}+\cdots +p^{l_i}$. In Section 7, we consider the cases $n=rp^l+s$, where $r\geq 1$ and $l, s\geq 0$ are integers. As a result, we study two special cases, $n=p^l+s$ and $n=rp^l$. 

In Section 8, we consider the case $n=rp^l-1$, which generalizes a previous result on reversed Dickson polynomials of the second kind obtained in \cite{Hong-Qin-Zhao-FFA-2016-2} by Hong, Qin, and Zhao. We also present a generalization of a result when $p=3$ that appeared in \cite{Hou-Mullen-Sellers-Yucas-FFA-2009}. Next, we show that the $n$-th reversed Dickson polynomial of the $(k+1)$-th kind $D_{n,k}(1,x)$ can be written in terms of $D_{n-2,1}(1,x)$ and $D_{n,0}(1,x)$. We also show that the $n$-th reversed Dickson polynomial of the $(k+1)$-th kind $D_{n,k}(1,x)$ can be written in terms of $D_{n-1,2}(1,x)$ and $D_{n,0}(1,x)$. Motivated by a matrix representation of the Dickson polynomials of the first kind that appeared in \cite[Chapter 2]{Lidl-Mullen-Turnwald-1993}, we present a matrix representation of the $n$-th reversed Dickson polynomial of the $(k+1)$-th kind $D_{n,k}(1,x)$. 

Throughout the paper, we denote $D_{n,0}(1,x)$ by $D_{n}(1,x)$ and we always assume that $p$ is odd unless otherwise specified. We would also like to point out that some computations are omitted in this paper due to their length. We refer the interested reader to \cite{Fernando-2017-II} for detailed computation.


\section{Preliminaries}

For $a\neq 0$, we write $x=y(a-y)$ with an indeterminate $y\in \f_{q^2}$ such that $y\neq \frac{a}{2}$.  Then we have

\begin{equation}\label{E2.1}
\begin{split}
D_{n,k}(a,x)&=k\,\Big[ \displaystyle\frac{y^n(a-y)-y(a-y)^n}{2y-a}\Big]+D_n(a,x),
\end{split}
\end{equation}

\noindent where $D_n(a,x)$ denotes the $n$-th reversed Dickson polynomial of the first kind; see \cite[Eq. 2.1]{Fernando-2016-3}. 

Let  $a\in \f_{q}^{*}$. Then it follows from the definition that
\[
D_{n,k}(a,x)=a^n\,D_{n,k}(1,\frac{x}{a^2}). 
\]
Hence $D_{n,k}(a,x)$ is a PP on $\f_{q}$ if and only if $D_{n,k}(1,x)$ is a PP on $\f_{q}$. Hereafter we always assume $a=1$.  Thus we can write \eqref{E2.1} as follows.

\begin{equation}\label{E2.3}
\begin{split}
D_{n,k}(1,y(1-y))&=k\,\Big[ \displaystyle\frac{y^n(1-y)-y(1-y)^n}{2y-1}\Big]+D_n(1,y(1-y)),
\end{split}
\end{equation}

\noindent where $y\neq \frac{1}{2}$. 

When $y=\frac{1}{2}$, we have

\begin{equation}\label{E2.6}
D_{n,k}\Big(1,\frac{1}{4}\Big)\,=\,\frac{k(n-1)+2}{2^n};
\end{equation}

\noindent see \cite[Eq. 2.6]{Fernando-2016-3}.

\begin{prop}\label{P2.2}(see \cite[Proposition~2.6]{Fernando-2016-3}) Let $p$ be an odd prime and $n$ be a non-negative integer. Then 
$$D_{0,k}(1,x)=2-k,\,\,D_{1,k}(1,x)=1,\,\,\textnormal{and}$$
$$D_{n,k}(1,x)=D_{n-1,k}(1,x)-x\,D_{n-2,k}(1,x),\,\, \textnormal{for} \,\,n\geq 2.$$
\end{prop}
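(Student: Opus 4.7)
The plan is to verify the two initial values directly from \eqref{E1.1} and then prove the three-term recurrence by exploiting the parametrization \eqref{E2.3}. For the base cases: $D_{0,k}(1,x)=2-k$ is the stated convention for $n=0$, and for $n=1$ the sum in \eqref{E1.1} collapses to its single $i=0$ term, which evaluates to $\frac{1}{1}\binom{1}{0}(-x)^0\cdot 1 = 1$, so $D_{1,k}(1,x)=1$.

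For $n\geq 2$, I would work with \eqref{E2.3} and split the right-hand side as
$$D_{n,k}(1,y(1-y)) = k\,f_n(y) + D_n(1,y(1-y)), \qquad f_n(y) := \frac{y^n(1-y)-y(1-y)^n}{2y-1}.$$
The reversed Dickson polynomial of the first kind satisfies the classical three-term recurrence $D_n(1,x) = D_{n-1}(1,x) - x\,D_{n-2}(1,x)$, which also follows instantly from the closed form $D_n(1,y(1-y)) = y^n+(1-y)^n$. It then remains to verify that $f_n$ obeys the same recurrence,
$$f_n(y) = f_{n-1}(y) - y(1-y)\,f_{n-2}(y).$$
This is a short telescoping computation: multiplying through by $2y-1$ and expanding the right-hand side using $1-(1-y)=y$ collapses it to $y^n(1-y)-y(1-y)^n$. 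Combining the two pieces with the factor $k$ then produces the desired recurrence for $D_{n,k}(1,y(1-y))$, valid as a rational identity in $y$ for every $y\neq 1/2$.

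The final step is to lift from this rational identity in $y$ to the polynomial identity in $x$. Since $y\mapsto y(1-y)$ takes infinitely many distinct values as $y$ ranges over the algebraic closure of $\f_p$, any polynomial in $x$ that vanishes on all such values must vanish identically; the exceptional evaluation $x=1/4$ is then automatic, though it can also be checked directly using \eqref{E2.6}. The main obstacle is essentially only bookkeeping for the two telescoping identities. A fully elementary alternative that avoids the lifting step is to compare coefficients of $(-x)^i$ in \eqref{E1.1} on both sides and reduce the recurrence to the Pascal-type identity
$$\frac{n-ki}{n-i}\binom{n-i}{i} = \frac{(n-1)-ki}{(n-1)-i}\binom{n-1-i}{i} + \frac{(n-2)-k(i-1)}{(n-1)-i}\binom{n-1-i}{i-1},$$
at the cost of slightly messier binomial manipulation.
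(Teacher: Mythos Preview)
The paper does not actually prove this proposition: it is quoted in the Preliminaries as a citation to \cite[Proposition~2.6]{Fernando-2016-3}, so there is no in-paper argument to compare against.

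Your proposal is correct and self-contained. The parametrization route via \eqref{E2.3} cleanly reduces the question to the two telescoping identities for $y^n+(1-y)^n$ and for $f_n(y)$, both of which you verify; the lifting step is legitimate because, by \eqref{E1.1}, $D_{n,k}(1,x)-D_{n-1,k}(1,x)+x\,D_{n-2,k}(1,x)$ is a genuine polynomial in $x$, and $y\mapsto y(1-y)$ assumes infinitely many values over $\overline{\f_p}$. Your alternative via the Pascal-type identity on the coefficients in \eqref{E1.1} is also correct and has the small conceptual advantage that it establishes the recurrence in $\mathbb{Z}[x]$ before reduction modulo $p$, making the parametrization \eqref{E2.3} (and the special handling of $x=\tfrac14$) unnecessary.
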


Now we list a few results that appeared in \cite{Fernando-2016-3}. 

For all $x\in \f_q$, we have 

\begin{equation}\label{E2.7}
\begin{split}
D_{p^l,k}(1,x)&=\frac{k}{2}\,(1-4x)^{\frac{p^l-1}{2}}+1-\frac{k}{2},
\end{split}
\end{equation}

\begin{equation}\label{E2.9}
\begin{split}
D_{p^l+1,k}(1,x)&=\Big(\frac{1}{2} - \frac{k}{4}\Big)\,(1-4x)^{\frac{p^l+1}{2}}+\frac{k}{4}\,(1-4x)^{\frac{p^l-1}{2}} +\frac{1}{2},
\end{split}
\end{equation}

and

\begin{equation}\label{E2.10}
\begin{split}
D_{p^l+2,k}(1,x)&=\frac{1}{2}\,(1-4x)^{\frac{p^l+1}{2}}+\frac{k}{2}\,x\,(1-4x)^{\frac{p^l-1}{2}}-\Big(1-\frac{k}{2}\Big)x+\frac{1}{2};
\end{split}
\end{equation}

\noindent see \cite[Eq. 2.7]{Fernando-2016-3}, \cite[Eq. 2.9]{Fernando-2016-3}, and \cite[Eq. 2.10]{Fernando-2016-3}, respectively.

Now we list a very well-known result on permutation monomials over finite fields. 

\begin{lem}\label{L2.1}(see \cite{Lidl-Niederreiter-97})
The monomial $x^n$ is a PP of $\f_q$ if and only if $(n, q-1)=1.$
\end{lem}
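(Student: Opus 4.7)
The plan is entirely elementary: reduce the permutation question on $\f_q$ to one about the cyclic group $\f_q^*$, then apply a standard fact about multiplication in $\Bbb{Z}/(q-1)\Bbb{Z}$. First I would observe that $0^n = 0$, so the map $x \mapsto x^n$ permutes $\f_q$ if and only if it permutes the nonzero elements $\f_q^*$.

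Next, I would fix a generator $g$ of the cyclic group $\f_q^*$, which has order $q-1$. Every nonzero element of $\f_q$ has a unique expression $g^j$ with $0 \leq j \leq q-2$, and $(g^j)^n = g^{jn}$. Hence $x \mapsto x^n$ is a permutation of $\f_q^*$ if and only if the map $j \mapsto jn \pmod{q-1}$ is a permutation of $\Bbb{Z}/(q-1)\Bbb{Z}$. Since $\Bbb{Z}/(q-1)\Bbb{Z}$ is finite, this latter map is a bijection if and only if $n$ is a unit in $\Bbb{Z}/(q-1)\Bbb{Z}$, i.e., $(n, q-1) = 1$, which gives the stated equivalence.

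Alternatively one can phrase the argument purely group-theoretically: $x \mapsto x^n$ is a homomorphism on the finite cyclic group $\f_q^*$, its kernel is precisely the unique subgroup of order $(n, q-1)$, and a homomorphism of a finite group is bijective if and only if its kernel is trivial. Since this is a well-known textbook result, there is no real obstacle to overcome; the only care needed is to handle the element $0$ separately and to state the bijection criterion on $\Bbb{Z}/(q-1)\Bbb{Z}$ cleanly.
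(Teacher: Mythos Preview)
Your argument is correct and is the standard textbook proof of this classical fact. The paper itself does not supply a proof; it simply states the lemma and cites \cite{Lidl-Niederreiter-97}, so there is nothing to compare against beyond noting that your write-up is exactly the kind of elementary cyclic-group argument one finds in that reference.
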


The following result appeared in \cite{Hou-Mullen-Sellers-Yucas-FFA-2009}.

\begin{lem}(see \cite[Lemma~5.5]{Hou-Mullen-Sellers-Yucas-FFA-2009})
Let $l$ be a positive odd integer and let $n=\frac{3^l+1}{2}$. Then in $\f_3[x]$, 
$$D_n(1, 1-x^2)= - D_n(x,1).$$
\end{lem}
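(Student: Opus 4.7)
The plan is to derive closed forms for both sides in characteristic $3$ via the functional equation for the first-kind reversed Dickson polynomial, and then match them by a short computation. I will use the identity
\begin{equation*}
D_n(a, y(a-y)) = y^n + (a-y)^n,
\end{equation*}
which follows by induction from Proposition~\ref{P2.2} (with $k=0$) together with matching initial values $D_0 = 2$, $D_1 = 1$. To evaluate $D_n(1, 1-x^2)$ in $\f_3[x]$, I solve $y^2 - y + (1 - x^2) = 0$; the discriminant $4x^2 - 3$ equals $x^2$ in $\f_3$, giving $y = -1-x$ and $1-y = -1+x$. Hence
\begin{equation*}
D_n(1, 1-x^2) = (-1)^n\bigl[(1+x)^n + (1-x)^n\bigr].
\end{equation*}
Since $l$ is odd, $3^l \equiv -1 \pmod 4$, so $n = (3^l+1)/2$ is even and $(-1)^n = 1$.

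Next, I introduce an auxiliary indeterminate $p$ and work in $\f_3(p)$ with $x = p + p^{-1}$ (the map $\f_3[x] \hookrightarrow \f_3(p)$ is injective, so identities can be verified there). Applying the same functional equation to $D_n(x, 1)$, with $y = p$ and $x - y = p^{-1}$, gives $D_n(x, 1) = p^n + p^{-n}$. The key characteristic-$3$ observation is
\begin{equation*}
p^2 + p + 1 = (p-1)^2, \qquad p^2 - p + 1 = (p+1)^2,
\end{equation*}
from which
\begin{equation*}
1 + x = \frac{(p-1)^2}{p}, \qquad 1 - x = -\frac{(p+1)^2}{p}.
\end{equation*}
Raising to the $n$-th power and using $(-1)^n = 1$ gives
\begin{equation*}
(1+x)^n + (1-x)^n = \frac{(p-1)^{2n} + (p+1)^{2n}}{p^n}.
\end{equation*}

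Finally, since $2n = 3^l + 1$, the Frobenius in characteristic $3$ yields $(p \pm 1)^{2n} = (p \pm 1)^{3^l}(p \pm 1) = (p^{3^l} \pm 1)(p \pm 1)$. Expanding and adding,
\begin{equation*}
(p-1)^{2n} + (p+1)^{2n} = 2p^{3^l+1} + 2 = -(p^{2n} + 1),
\end{equation*}
so $(1+x)^n + (1-x)^n = -(p^n + p^{-n}) = -D_n(x,1)$, as required. The main obstacle is spotting the characteristic-$3$ factorizations $p^2 \pm p + 1 = (p \mp 1)^2$; once these are in hand, the identity $2n - 1 = 3^l$ together with the parity of $n$ (controlled by $l$ being odd) reduce the proof to a one-line Frobenius calculation.
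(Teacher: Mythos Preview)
Your proof is correct. Note, however, that the paper does not actually prove this lemma; it only cites it from \cite{Hou-Mullen-Sellers-Yucas-FFA-2009}. The nearest thing to a proof in the paper is the argument for the generalisation $D_{n,k}(1,1-x^2)=(\tfrac{k}{2}-1)D_n(x,1)+\tfrac{k}{2}\,D_{n-1}(x,1)/x$ in Section~7, and there the $k=0$ term is written directly as $-D_n(x,1)$ by invoking the cited lemma, so that proof is not self-contained for $k=0$.

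That said, your method and the paper's method for the general-$k$ identity are the same computation in different coordinates. The paper substitutes $x=y^2+y^{-2}$ and uses, in characteristic~$3$, the identities $2+x=(y+y^{-1})^2$ and $2-x=-(y-y^{-1})^2$, followed by Frobenius on the exponent $2n=3^l+1$. Under $p=y^2$ one has $(p-1)^2/p=(y-y^{-1})^2$ and $(p+1)^2/p=(y+y^{-1})^2$, so your factorisations $p^2\pm p+1=(p\mp1)^2$ are exactly the paper's identities rewritten; the closing Frobenius step is then identical. In effect your argument fills in the $k=0$ step that the paper's Section~7 proof takes for granted.
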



\section{The case $n=p^l+3$}

Let $n=p^l+3$. Then from \eqref{E2.9}, \eqref{E2.10}, and Proposition~\ref{P2.2} we have

\begin{equation}\label{E3.1}
\begin{split}
&D_{p^l+3,k}(1,x)=D_{p^l+2,k}(1,x)-xD_{p^l+1,k}(1,x)\cr
&= \frac{1}{2}\,(1-4x)^{\frac{p^l+1}{2}}+\frac{k}{2}\,x\,(1-4x)^{\frac{p^l-1}{2}}-\Big(1-\frac{k}{2}\Big)x+\frac{1}{2}\cr
&-\Big(\frac{1}{2}-\frac{k}{4}\Big)\,x\,(1-4x)^{\frac{p^l+1}{2}}-\frac{k}{4}\,x\,(1-4x)^{\frac{p^l-1}{2}}-\frac{1}{2}\,x.
\end{split}
\end{equation}

Let $u=1-4x$. Then

\begin{equation}\label{E3.2}
\begin{split}
&D_{p^l+3,k}(1,x)\cr
&= \frac{1}{2}\,u^{\frac{p^l+1}{2}}+\frac{k}{2}\,\Big(\frac{1-u}{4}\Big)\,u^{\frac{p^l-1}{2}}-\Big(1-\frac{k}{2}\big)\Big(\frac{1-u}{4}\Big)+\frac{1}{2}\cr
&-\Big(\frac{1}{2}-\frac{k}{4}\Big)\,\Big(\frac{1-u}{4}\Big)\,u^{\frac{p^l+1}{2}}-\frac{k}{4}\,\Big(\frac{1-u}{4}\Big)\,u^{\frac{p^l-1}{2}}-\frac{1}{2}\,\Big(\frac{1-u}{4}\Big)\cr
&=\frac{(2-k)}{16}\,u^{\frac{p^l+3}{2}}+\frac{3}{8}\,u^{\frac{p^l+1}{2}}+\frac{k}{16}\,u^{\frac{p^l-1}{2}}+\frac{(3-k)}{8}\,u+\frac{(k+1)}{8}.
\end{split}
\end{equation}

This leads to the following immediate theorem.

\begin{thm} 
$D_{p^l+3,k}(1,x)$ is a PP of $\f_{p^e}$ if and only if the polynomial 
$$(2-k)\,x^{\frac{p^l+3}{2}}+6\,x^{\frac{p^l+1}{2}}+k\,x^{\frac{p^l-1}{2}}+2(3-k)\,x$$
is a PP of $\f_{p^e}$. 
\end{thm} 

Let
\begin{equation}\label{N1}
f(x)=(2-k)\,x^{\frac{p^l+3}{2}}+6\,x^{\frac{p^l+1}{2}}+k\,x^{\frac{p^l-1}{2}}+2(3-k)\,x.
\end{equation}

\subsection{The case $p=3$} \hspace*{\fill} \\

Note that in this case $k=0, 1,$ or $2$. 

\begin{thm}
$D_{3^l+3,0}(1,x)$ is a PP of $\f_{p^e}$ if and only if $\textnormal{gcd}(\frac{3^l+3}{2}, 3^e-1)=1$. 
\end{thm}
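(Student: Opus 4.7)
The plan is to specialize the equivalence just established in (\ref{N1}) to the case $p=3$, $k=0$. By the computation leading to (\ref{N1}), the polynomial $D_{3^l+3,0}(1,x)$ is a permutation polynomial of $\mathbb{F}_{3^e}$ if and only if
$$f(x) = (2-0)\,x^{\frac{3^l+3}{2}} + 6\,x^{\frac{3^l+1}{2}} + 0\cdot x^{\frac{3^l-1}{2}} + 2(3-0)\,x$$
is a permutation polynomial of $\mathbb{F}_{3^e}$.

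The next step is simply to reduce modulo $3$. In characteristic $3$ we have $6 \equiv 0$ and $2(3-0) = 6 \equiv 0$, so the middle two terms collapse and only the leading term survives:
$$f(x) = 2\,x^{\frac{3^l+3}{2}} \in \mathbb{F}_{3^e}[x].$$
Since $2 \in \mathbb{F}_{3^e}^{\ast}$, multiplication by $2$ is a bijection of $\mathbb{F}_{3^e}$, so $f$ is a PP of $\mathbb{F}_{3^e}$ if and only if the monomial $x^{\frac{3^l+3}{2}}$ is a PP of $\mathbb{F}_{3^e}$.

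Finally, I would invoke Lemma~\ref{L2.1}: the monomial $x^{n}$ permutes $\mathbb{F}_{q}$ precisely when $\gcd(n,q-1)=1$. Taking $n=\frac{3^l+3}{2}$ and $q=3^e$ yields the stated condition $\gcd\bigl(\frac{3^l+3}{2},\,3^e-1\bigr)=1$. There is no real obstacle here; the only thing to be careful about is the characteristic reduction of the coefficients $6$ and $2(3-k)$, which conveniently both vanish when $p=3$ and $k=0$, leaving a pure monomial to which Lemma~\ref{L2.1} applies directly.
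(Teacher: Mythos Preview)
Your argument is correct and is essentially identical to the paper's proof: the paper also specializes \eqref{N1} at $p=3$, $k=0$ to obtain $f(x)=2x^{\frac{3^l+3}{2}}$ and then declares ``the rest is obvious,'' which is exactly your invocation of Lemma~\ref{L2.1}. You have simply spelled out the coefficient reductions and the final monomial criterion that the paper leaves implicit.
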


\begin{proof}
When $k=0$,  \eqref{N1} becomes 
$$f(x)=2x^{\frac{3^l+3}{2}}.$$
The rest is obvious. 
\end{proof}

\begin{thm}
$D_{3^l+3,1}(1,x)$ is not a PP of $\f_{p^e}$.
\end{thm}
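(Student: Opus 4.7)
The plan is to apply the reduction stated just before \eqref{N1}: with $p=3$ and $k=1$, the polynomial $D_{3^l+3,1}(1,x)$ is a PP of $\f_{3^e}$ if and only if the associated polynomial $f$ of \eqref{N1} is a PP of $\f_{3^e}$. Substituting $k=1$ into \eqref{N1} and reducing modulo $3$ (so $6\equiv 0$ and $2(3-k)=4\equiv 1$) gives the simple expression
$$f(x) \,=\, x^{(3^l+3)/2} \,+\, x^{(3^l-1)/2} \,+\, x.$$
Once $f$ is in this explicit form, the rest of the proof will consist of exhibiting two distinct points of $\f_3 \subseteq \f_{3^e}$ at which $f$ takes the same value, which already suffices to rule out $f$ being a permutation of the larger field.

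For the main range $l \geq 1$ all three exponents in $f$ are positive, so $f(0)=0$, and an immediate evaluation gives $f(1)=1+1+1=0$ in characteristic $3$. The collision $f(0)=f(1)$ then shows $f$ is not injective on $\f_{3^e}$. For $l=0$ the displayed formula collapses to $f(x)=x^2+x+1$, which in characteristic $3$ equals $(x-1)^2$, and a direct check yields $f(0)=1=f(-1)$, again a collision.

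The only real subtlety, and what I would regard as the main (but mild) obstacle, is recognizing that $l=0$ is genuinely different: the term $x^{(3^l-1)/2}$ becomes the constant $1$, so $f(0)$ is no longer automatically $0$, and one must produce a different pair of coinciding inputs than in the generic case. Once the two cases are combined, $f$ fails to permute $\f_{3^e}$ and the equivalence recorded before \eqref{N1} delivers the conclusion that $D_{3^l+3,1}(1,x)$ is not a PP of $\f_{3^e}$.
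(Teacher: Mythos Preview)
Your argument is correct and follows the same route as the paper: both reduce via \eqref{N1} to the trinomial $f(x)=x^{(3^l+3)/2}+x^{(3^l-1)/2}+x$ and assert that this is not a PP of $\f_{3^e}$. The paper simply declares this ``clearly'' without justification, whereas you supply the explicit collision $f(0)=f(1)=0$ for $l\ge 1$ and correctly isolate the boundary case $l=0$ (where the middle exponent vanishes) with its own collision $f(0)=f(-1)=1$; so your version is strictly more complete.
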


\begin{proof}
When $p=3$ and $k=1$, $f(x)$ is a PP of $\f_{3^e}$ if and only if the trinomial $x^{\frac{p^l+3}{2}}+x^{\frac{p^l-1}{2}}+x$ is a PP of $\f_{3^e}$. But $x^{\frac{p^l+3}{2}}+x^{\frac{p^l-1}{2}}+x$ is clearly not a PP of $\f_{3^e}$.
\end{proof}

\begin{thm} (\cite[Theorem~4.1]{Fernando-2016-4})
$D_{3^l+3,2}(1,x)$ is a PP of $\f_{p^e}$ if and only if 
\begin{itemize}
\item [(i)]  $l=0$, or
\item [(ii)] $l=me+1$, where $m$ is a non-negative even integer.
\end{itemize}
\end{thm}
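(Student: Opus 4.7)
For $p=3$ and $k=2$ the coefficients $2-k=0$, $6\equiv0$, $k=2$, $2(3-k)=2$ reduce $f(x)$ in \eqref{N1} to $f(x)=2x^{(3^l-1)/2}+2x=-(x^N+x)$ with $N:=(3^l-1)/2$, so $D_{3^l+3,2}(1,x)$ permutes $\f_{3^e}$ iff $g(x):=x^N+x$ does. The plan is to settle this binomial case.

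Sufficiency: $l=0$ gives $g(x)=1+x$, linear, hence a PP. For $l=me+1$ with $m=2k$ even I would write
\[3^l-1 = 3\bigl((3^e)^m-1\bigr)+2 = 3(3^e-1)\bigl(1+3^e+\cdots+3^{(m-1)e}\bigr)+2\]
and observe that the bracketed sum is a sum of $m$ odd numbers, hence even for $m$ even; therefore $(3^l-1)/2\equiv 1\pmod{3^e-1}$, so $x^N=x$ as a function on $\f_{3^e}$ and $g(x)=2x$ is a PP.

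Necessity: assume $l\ge 1$ and $l$ is not of the form $me+1$ with $m$ even. Writing $l=te+r$ with $0\le r<e$, the same kind of reduction yields $N\equiv (3^r-1)/2$ or $(3^r-1)/2+(3^e-1)/2$ modulo $3^e-1$, according as $t$ is even or odd. The good residue class is exactly $(t,r)=(\text{even},1)$ for $e\ge 2$ and $(t,r)=(\text{odd},0)$ for $e=1$, both of which coincide with $l=me+1$, $m$ even. Every other class I would rule out by exhibiting a collision of $g$. When $r=0$ and $t\ge 2$ is even, $N\equiv 0$ gives $g(0)=g(-1)=0$. When $r=1$ and $t$ is odd, $g(x)=x(\chi(x)+1)$ annihilates every non-square. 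When the reduced exponent is even, $g$ restricted to $\f_3$ is $1+x$, which collides with $g(0)=0$ at $x=-1$. When $r=0$, $t$ odd, $e\ge 2$: $g(x)=\chi(x)+x$, and either $e$ is even (so $\chi(-1)=1$ and $g(-1)=g(0)=0$) or $e\ge 3$ is odd, where the standard character-sum identity $\sum_{y\in\f_q}\chi(y)\chi(y-1)=-1$ combined with elementary inclusion–exclusion gives $\#\{y\ne 0,1:\chi(y)=\chi(y-1)=-1\}=(q-3)/4>0$, yielding a pair $x_1,\ x_2=x_1+2$ with $g(x_1)=g(x_2)$. When $r\ge 2$ and $d:=\gcd(r,e)\ge 2$, restriction of $g$ to the subfield $\f_{3^d}$ takes the form $1+x$ or $\chi_d(x)+x$, both excluded by the preceding cases applied to the smaller field.

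The main obstacle is the residual family in which $r\ge 3$ is odd and $\gcd(r,e)=1$, since subfield descent then only reaches $\f_3$ (on which $g$ acts as $-x$, a PP) and the zero-set of $g$ on $\f_{3^e}^{\times}$ can be empty, so none of the preceding collisions apply. To dispatch these cases I would pass to the difference $g(x+c)-g(x)$, substitute $x=cy$, and reduce the collision equation to $c^{N-1}P(y)=-1$ for an explicit polynomial $P\in\f_3[y]$ of degree $N-1$ whose coefficients are the characteristic-$3$ binomial coefficients $\binom{N}{i}$; then apply a Weil-type bound to $\sum_{y\in\f_q}\chi(P(y))$ to force $P$ to take a non-square value in $\f_{3^e}$, from which a suitable $c\neq 0$ and the desired collision are recovered. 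Expressing this final step in the elementary spirit of the surrounding results is the delicate part of the argument.
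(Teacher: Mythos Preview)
Your reduction of $D_{3^l+3,2}(1,x)$ to the binomial $x^{(3^l-1)/2}+x$ is exactly what the paper does. The paper's proof, however, stops right there: it simply invokes \cite[Theorem~4.1]{Fernando-2016-4} for the characterization of when this binomial permutes $\f_{3^e}$. You are attempting to reprove that cited result from scratch, which is a substantially harder task than what is required here.

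Your sufficiency argument is correct, and several of your necessity cases are sound. There is a small slip in the $r=0$, $t$ odd, $e\ge 3$ odd case: the collision $g(x_1)=g(x_1+2)$ actually forces $\chi(x_1)=1$ and $\chi(x_1-1)=-1$, not $\chi(y)=\chi(y-1)=-1$ as you wrote; fortunately that count is also $(q-3)/4$ when $q\equiv 3\pmod 4$, so the conclusion survives.

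The genuine gap is the residual family you yourself flag ($r\ge 3$ odd with $\gcd(r,e)=1$). Your Weil-bound sketch does not close it: the auxiliary polynomial $P$ has degree $N-1$, and even after reducing $N$ modulo $3^e-1$ this degree is generically comparable to $q$, so the estimate $\bigl|\sum_{y}\chi(P(y))\bigr|\le(\deg P-1)\sqrt{q}$ is vacuous. This case needs a different idea (and is precisely what the external reference handles); without it your necessity direction is incomplete. For the purposes of the present paper, citing \cite[Theorem~4.1]{Fernando-2016-4} after the reduction is both sufficient and what the author does.
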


\subsection{The case $p>3$} \hspace*{\fill} \\

We have the following results in this case. 

\begin{rmk}
Let $p\geq 5$ and $k=2$. If $l=0$, then clearly $D_{p^l+3,k}(1,x)$ is a PP of $\f_{p^e}$ since every liner polynomial is a PP. 
\end{rmk}

\begin{thm}
Let $k=2$ and $p\equiv 1\pmod{4}$. If $l\neq 0$, then $D_{p^l+3,k}(1,x)$ is not a PP of $\f_{p^e}$. 
\end{thm}

\begin{proof}
Let $k=2$. Then $D_{p^l+3,k}(1,x)$ is a PP of $\f_{p^e}$ if and only if $f(x)=3x^{\frac{p^l+1}{2}}+x^{\frac{p^l-1}{2}}+x$ is a PP of $\f_{p^e}$. Since $p\equiv 1\pmod{4}$, $(-\frac{1}{4})$ is a quadratic residue modulo $p$. The proof follows from the fact that $f(0)=0=f\Big(-\frac{1}{4}\Big)$. 
\end{proof}




\begin{rmk}
Let $p>7$, $k=7$. Then $D_{p^l+3,k}(1,x)$ is not a PP of $\f_{p^e}$ since $f(0)=0=f(1)$. 
\end{rmk}

\begin{rmk}
Let $p>5$, $k=0$, and $-6$ be a quadratic residue of $p$. Then $D_{p^l+3,k}(1,x)$ is not a PP of $\f_{p^e}$ since $f(0)=0=f(-6)$. 
\end{rmk}


\section{The case $n=p^{l_1}+p^{l_2}+p^{l_3}$}

In this Section, we explain the case $n=p^{l_1}+p^{l_2}+p^{l_3}$, where $l_1, l_2$, and $l_3$ are non-negative integers. 

Let $n=p^{l_1}+p^{l_2}+p^{l_3}$ in \eqref{E2.3}  and \eqref{E2.6}. Then for all $x\in \f_q$, we have 

\begin{equation}
\begin{split}
&D_{n,k}(1,x)\cr
&=\frac{k}{8}\,(1-4x)^\frac{p^{l_1}+p^{l_2}+p^{l_3}-1}{2}+\frac{(2-k)}{8}\,[(1-4x)^\frac{p^{l_1}+p^{l_2}}{2}+(1-4x)^\frac{p^{l_1}+p^{l_3}}{2}+(1-4x)^\frac{p^{l_2}+p^{l_3}}{2}]\cr
&+\frac{k}{8}[(1-4x)^\frac{p^{l_1}-1}{2}+(1-4x)^\frac{p^{l_2}-1}{2}+(1-4x)^\frac{p^{l_3}-1}{2}]+\frac{(2-k)}{8};
\end{split}
\end{equation}

\noindent see \cite[Section 4]{Fernando-2017-II} for detailed computation. 

This leads to the following immediate theorem. 

\begin{thm}
$D_{p^{l_1}+p^{l_2}+p^{l_3},k}(1,x)$ is a PP of $\f_q$ if and only if the seven-term polynomial
\[ 
\begin{split}
k\,x^\frac{p^{l_1}+p^{l_2}+p^{l_3}-1}{2}+(2-k)\,[x^\frac{p^{l_1}+p^{l_2}}{2}+x^\frac{p^{l_1}+p^{l_3}}{2}+x^\frac{p^{l_2}+p^{l_3}}{2}]+k[x^\frac{p^{l_1}-1}{2}+x^\frac{p^{l_2}-1}{2}+x^\frac{p^{l_3}-1}{2}]
\end{split}
\]
is a PP of $\f_q$. 
\end{thm}

Let 

\[ 
\begin{split}
f(x)=k\,x^\frac{p^{l_1}+p^{l_2}+p^{l_3}-1}{2}+(2-k)\,[x^\frac{p^{l_1}+p^{l_2}}{2}+x^\frac{p^{l_1}+p^{l_3}}{2}+x^\frac{p^{l_2}+p^{l_3}}{2}]+k[x^\frac{p^{l_1}-1}{2}+x^\frac{p^{l_2}-1}{2}+x^\frac{p^{l_3}-1}{2}].
\end{split}
\]

\subsection{The case $p=3$}  \hspace*{\fill} \\ 

In this case, $k=0, 1,\,\textnormal{or}\,\,2$. 

\begin{thm}
Let $k=0$, $p=3$, and $n=p^{l_1}+p^{l_2}+p^{l_3}$, where $l_1, l_2$, and $l_3$ are non-negative integers. Then $D_{n,k}(1,x)$ is not a PP of $\f_{p^e}$. 
\end{thm}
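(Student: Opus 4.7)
The plan is to apply the proposition immediately preceding this theorem, which reduces the permutation question for $D_{n,k}(1,x)$ to a permutation question for a seven-term monomial sum $f(x)$. Setting $k=0$ kills four of the seven terms (the three monomials with coefficient $k$ together with the leading one), leaving only
\[
f(x)=2\bigl[x^{(p^{l_1}+p^{l_2})/2}+x^{(p^{l_1}+p^{l_3})/2}+x^{(p^{l_2}+p^{l_3})/2}\bigr].
\]
So the task reduces to showing $f$ fails to permute $\mathbb{F}_{3^e}$.

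The crucial observation is that in characteristic $3$ the coefficient $2$ equals $-1$, but more importantly the bracket $[\,\cdot\,]$ is a sum of \emph{three} monomials. Thus $f(1)=2\cdot 3=0$ in $\mathbb{F}_{3^e}$, while trivially $f(0)=0$. Since $0\neq 1$ in $\mathbb{F}_{3^e}$ yet $f(0)=f(1)$, the map $f$ is not injective on $\mathbb{F}_{3^e}$ and hence not a permutation. By the proposition, $D_{n,0}(1,x)$ is then not a PP of $\mathbb{F}_{3^e}$.

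There is essentially no obstacle here; the argument works uniformly in the exponents $l_1,l_2,l_3$ because the specific exponents never enter the computation of $f(0)$ and $f(1)$. The only thing worth checking carefully is that the reduction given by the proposition is valid regardless of whether the three exponents are distinct (if two coincide the polynomial collapses, but the evaluation at $0$ and $1$ still gives $0=0$, so the conclusion is unaffected).
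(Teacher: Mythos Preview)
Your proof is correct and rests on the same reduction as the paper's: for $k=0$ the proposition collapses the question to whether
\[
f(x)=2\bigl[x^{(p^{l_1}+p^{l_2})/2}+x^{(p^{l_1}+p^{l_3})/2}+x^{(p^{l_2}+p^{l_3})/2}\bigr]
\]
permutes $\mathbb{F}_{3^e}$. The paper then splits into four cases according to how many of the $l_i$ vanish, simplifies the exponent list in each case, and declares the resulting trinomial ``clearly'' not a PP without spelling out why. Your argument is tidier: you observe uniformly that each exponent $(p^{l_i}+p^{l_j})/2\geq 1$, so $f(0)=0$, while $f(1)=2\cdot 3=0$ in characteristic~$3$ regardless of the exponents. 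This single collision $f(0)=f(1)$ handles all four of the paper's cases at once (including Case~1, where all $l_i=0$ and $f$ is identically zero), so the case split is unnecessary.
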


\begin{proof}

\textbf{Case 1.} $l_1=l_2=l_3=0$. $f(x)=0$ which is not a PP of $\f_{3^e}$. 

\textbf{Case 2.} Exactly two of $l_1, l_2, l_3$ are zero. Without loss of generality, let $l_1\neq 0$ and $l_2=l_3=0$. Then $f(x)$ is a PP of $\f_{3^e}$ if and only if the trinomial $x^\frac{p^{l_1}+1}{2}+x^\frac{p^{l_1}+1}{2}+x$ is a PP of $\f_{3^e}$. Clearly, $x^\frac{p^{l_1}+1}{2}+x^\frac{p^{l_1}+1}{2}+x$ is not a PP of $\f_{3^e}$. 

\textbf{Case 3.} Exactly one of $l_1, l_2, l_3$ is zero. Without loss of generality, let $l_1\neq 0$, $l_2\neq 0$ and $l_3=0$. Then $f(x)$ is a PP of $\f_{3^e}$ if and only if the trinomial $x^\frac{p^{l_1}+p^{l_2}}{2}+x^\frac{p^{l_1}+1}{2}+x^\frac{p^{l_2}+1}{2}$ is a PP of $\f_{3^e}$. Clearly, $x^\frac{p^{l_1}+p^{l_2}}{2}+x^\frac{p^{l_1}+1}{2}+x^\frac{p^{l_2}+1}{2}$ is not a PP of $\f_{3^e}$. 

\textbf{Case 4.} $l_1\neq 0$, $l_2\neq 0$ and $l_3\neq 0$. Then $f(x)$ is a PP of $\f_{3^e}$ if and only if the trinomial $x^\frac{p^{l_1}+p^{l_2}}{2}+x^\frac{p^{l_1}+p^{l_3}}{2}+x^\frac{p^{l_2}+p^{l_3}}{2}$ is a PP of $\f_{3^e}$. Clearly, $x^\frac{p^{l_1}+p^{l_2}}{2}+x^\frac{p^{l_1}+p^{l_3}}{2}+x^\frac{p^{l_2}+p^{l_3}}{2}$ is not a PP of $\f_{3^e}$.  

\end{proof}

\begin{rmk}
The case in which exactly two of $l_1, l_2$ and $l_3$ are zero and the other is equal to $e$, i.e. $n=p^e+2$, is explained \cite[Corollary~5.2]{Hou-Mullen-Sellers-Yucas-FFA-2009}. It is shown that $D_{p^e+2,0}(1,x)$ is a PP of $\f_{p^e}$ if and only if $p^e\equiv 1 \pmod{3}$.
\end{rmk}

\begin{thm}
Let $k=1$, $p=3$, and $n=p^{l_1}+p^{l_2}+p^{l_3}$. Assume that exactly one of $l_1, l_2, l_3$ is zero. Then $D_{n,k}(1,x)$ is not a PP of $\f_{p^e}$. 
\end{thm}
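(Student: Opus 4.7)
The plan is to apply the seven-term reduction of the preceding proposition and then simply exhibit two distinct points in $\f_{3^e}$ on which the reduced polynomial $f$ takes the same value. Since $f(x)$ is manifestly symmetric in $l_1,l_2,l_3$, I would first assume without loss of generality that $l_3=0$ while $l_1,l_2\geq 1$.

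Next I would substitute $k=1$, $p=3$, and $p^{l_3}=1$ into the seven-term polynomial. Two things collapse: the leading exponent $(3^{l_1}+3^{l_2}+3^{l_3}-1)/2$ equals $(3^{l_1}+3^{l_2})/2$, so that term merges with the second term to produce a coefficient $2\equiv -1\pmod 3$; and the final term $x^{(3^{l_3}-1)/2}$ becomes the constant $1$. After this bookkeeping, $f$ simplifies to
$$f(x)=-x^{(3^{l_1}+3^{l_2})/2}+x^{(3^{l_1}+1)/2}+x^{(3^{l_2}+1)/2}+x^{(3^{l_1}-1)/2}+x^{(3^{l_2}-1)/2}+1.$$

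Because $l_1,l_2\geq 1$, every non-constant exponent appearing above is at least $1$, so evaluating at $x=0$ leaves only the constant term, giving $f(0)=1$. Evaluating at $x=1$ turns every monomial into $1$, so $f(1)=-1+1+1+1+1+1=4\equiv 1\pmod 3$. Thus $f(0)=f(1)=1$ with $0\neq 1$ in $\f_{3^e}$, so $f$ fails to be injective and cannot be a PP. By the proposition, $D_{n,k}(1,x)$ is then not a PP of $\f_{3^e}$.

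There is no substantive obstacle here beyond careful indexing; the entire point of the argument is the coincidence of exponents that arises precisely because the hypothesis ``exactly one $l_i$ is zero'' forces $p^{l_i}-1=0$ for that index, which both kills one of the middle exponents (making it agree with the leading one and producing the cancellation $2=-1$ in $\f_3$) and produces a free constant term, so that the values at $0$ and $1$ are forced to coincide.
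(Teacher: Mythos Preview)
Your argument is correct and is essentially the paper's own proof: after the WLOG reduction to $l_3=0$, the paper obtains the same polynomial (written there as $g(x)=2x^{(p^{l_1}+p^{l_2})/2}+x^{(p^{l_1}+1)/2}+x^{(p^{l_2}+1)/2}+x^{(p^{l_1}-1)/2}+x^{(p^{l_2}-1)/2}$, i.e.\ your $f$ with the constant $1$ dropped) and asserts it is ``clearly'' not a PP, which amounts to the same observation that $g(0)=g(1)=0$. Your write-up is in fact more explicit than the paper's.
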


\begin{proof}
Without loss of generality, let $l_1\neq 0$, $l_2\neq 0$ and $l_3=0$. Then $f(x)$ is a PP of $\f_{p^e}$ if and only if $$g(x)=2x^\frac{p^{l_1}+p^{l_2}}{2}+x^\frac{p^{l_1}+1}{2}+x^\frac{p^{l_2}+1}{2}+x^\frac{p^{l_1}-1}{2}+x^\frac{p^{l_2}-1}{2}$$ is a PP of $\f_{p^e}$. Clearly, $g(x)$ is not a PP of $\f_{p^e}$. 
\end{proof}

\begin{rmk}
Let $k=1$ and $p=3$. The cases in which $l_1=l_2=l_3=0$ and exactly two of $l_1, l_2, l_3$ are zero, are explained in \cite[Theorem~4.1]{Fernando-2016-4}.
\end{rmk}

\begin{thm}
Let $k=2$, $p=3$, and $n=p^{l_1}+p^{l_2}+p^{l_3}$. Assume that exactly one of $l_1, l_2, l_3$ is zero. Then $D_{n,k}(1,x)$ is not a PP of $\f_{p^e}$. 
\end{thm}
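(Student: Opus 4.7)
The plan is to invoke the preceding proposition to reduce the permutation question for $D_{n,k}(1,x)$ to the same question for the seven-term polynomial $f(x)$ displayed just before the theorem. Since that expression for $f$ is symmetric in $l_1,l_2,l_3$, I may relabel so that $l_3=0$ while $l_1,l_2\geq 1$. Substituting $k=2$ and $p=3$ kills the factor $(2-k)$ in front of the middle bracket of $f$, so those three terms disappear entirely. The leading exponent $\frac{p^{l_1}+p^{l_2}+p^{l_3}-1}{2}$ collapses to $\frac{3^{l_1}+3^{l_2}}{2}$, and the term $k\,x^{\frac{p^{l_3}-1}{2}}$ becomes the constant $2$. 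What remains is
$$f(x)\;=\;2\,x^{\frac{3^{l_1}+3^{l_2}}{2}}\,+\,2\,x^{\frac{3^{l_1}-1}{2}}\,+\,2\,x^{\frac{3^{l_2}-1}{2}}\,+\,2.$$

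Having written $f$ in this compact form, I would simply evaluate it at $0$ and $1$. Because $l_1,l_2\geq 1$, every non-constant exponent that appears is strictly positive, so the three monomials vanish at $x=0$ and $f(0)=2$. At $x=1$ one gets $f(1)=2+2+2+2=8\equiv 2\pmod{3}$. Hence $f(0)=f(1)$ in $\f_3\subseteq \f_{3^e}$, and since $0\neq 1$ the polynomial $f$ fails to be injective on $\f_{3^e}$. By the preceding proposition this forces $D_{n,k}(1,x)$ not to be a PP of $\f_{3^e}$, which is exactly the claim.

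I do not foresee a genuine obstacle: the whole argument collapses to a one-line algebraic simplification of $f$ followed by a two-point evaluation. The only subtleties worth noting are that the ``WLOG $l_3=0$'' step uses the evident $S_3$-symmetry of $f$ in $(l_1,l_2,l_3)$, and that the reduction of the theorem to the proposition is genuinely valid because the substitution $x\mapsto 1-4x=1-x$ in characteristic $3$ is a bijection and the leading factor $\tfrac18=\tfrac12$ is a nonzero scalar of $\f_3$. Even the borderline possibility $l_1=l_2$ causes no trouble, since the two equal-exponent terms then combine with coefficient $4\equiv 1$ and the check $f(0)=f(1)=2$ still goes through unchanged.
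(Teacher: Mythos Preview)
Your proof is correct and follows essentially the same approach as the paper: both use the $S_3$-symmetry to set $l_3=0$, specialize the seven-term polynomial $f$ with $k=2$ so that the $(2-k)$ bracket vanishes, and then observe that the resulting polynomial is not a PP. The paper strips the constant and the scalar $2$ to write $g(x)=x^{\frac{p^{l_1}+p^{l_2}}{2}}+x^{\frac{p^{l_1}-1}{2}}+x^{\frac{p^{l_2}-1}{2}}$ and simply says ``clearly $g$ is not a PP'', whereas you keep the constant and make the collision $f(0)=f(1)=2$ explicit; this is exactly the computation hiding behind the paper's ``clearly''.
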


\begin{proof}
Without loss of generality, let $l_1\neq 0$, $l_2\neq 0$ and $l_3=0$. Then $f(x)$ is a PP of $\f_{p^e}$ if and only if $$g(x)=x^\frac{p^{l_1}+p^{l_2}}{2}+x^\frac{p^{l_1}-1}{2}+x^\frac{p^{l_2}-1}{2}$$ is a PP of $\f_{p^e}$. Clearly, $g(x)$ is not a PP of $\f_{p^e}$. 
\end{proof}

\begin{rmk}
Let $k=2$ and $p=3$. The cases in which $l_1=l_2=l_3=0$ and exactly two of $l_1, l_2, l_3$ are zero, are explained in \cite[Theorem~2.15]{Fernando-2016-3}.
\end{rmk}

\subsection{The case $p>3$} 

\begin{thm}
Let $p\geq 5$ and $k\neq 0,3$. Assume that 
\begin{enumerate}
\item exactly two of $l_1, l_2,$ and $ l_3$ are non-zero, and 
\item $\displaystyle{\frac{k}{(k-3)}}$ is a quadratic residue modulo $p$.
\end{enumerate} 
Then $D_{n,k}(1,x)$ is not a PP of $\f_{p^e}$.
\end{thm}

\begin{proof}
Since exactly two of $l_1, l_2,$ and $ l_3$ are non-zero, we have $f(0)=k$. The proof follows from the fact that $$f\Big(\frac{k}{(k-3)}\Big)=k=f(0).$$
\end{proof}

\begin{thm}
Let $p\geq 5$ and $k\neq 0,3$. Assume that 
\begin{enumerate}
\item $l_1, l_2,$ and $ l_3$ are all non-zero, and 
\item $\displaystyle{\frac{3k}{2(k-3)}}$ is a quadratic residue modulo $p$.
\end{enumerate} 
Then $D_{n,k}(1,x)$ is not a PP of $\f_{p^e}$.
\end{thm}

\begin{proof}
Since $l_1, l_2,$ and $ l_3$ are all non-zero, we have $f(0)=0$. The proof follows from the fact that $$f\Big(\frac{3k}{2(k-3)}\Big)=0=f(0).$$
\end{proof}

\begin{rmk}
The case in which exactly one of $l_1, l_2,$ and $ l_3$ is nonzero is explained in \cite{Fernando-2016-3} and \cite{Fernando-2016-4}. 
\end{rmk}



\begin{rmk}
Let $p>3$ and $k=3$. Then $D_{p^{l_1}+p^{l_2}+p^{l_3},k}(1,x)$ is not a PP of $\f_{p^e}$ for any $l_1, l_2$, and $l_3$ since $f(a)=9$ for any quadratic residue $a$ of $p$. 
\end{rmk}


\section{The case $n=p^{l_1}+p^{l_2}+p^{l_3}+p^{l_4}$}

In this Section, we explain the case $n=p^{l_1}+p^{l_2}+p^{l_3}+p^{l_4}$, where $l_1, l_2, l_3$, and $l_4$ are non-negative integers. 

Let $n=p^{l_1}+p^{l_2}+p^{l_3}+p^{l_4}$. Then for all $x\in \f_q$, we have 

\begin{equation}
\tiny
{
\begin{split}
&D_{n,k}(1,x)\cr
&=\frac{(2-k)}{16}\,(1-4x)^\frac{p^{l_1}+p^{l_2}+p^{l_3}+p^{l_4}}{2}\cr 
&+\frac{k}{16}\ [(1-4x)^\frac{p^{l_1}+p^{l_2}+p^{l_3}-1}{2}+(1-4x)^\frac{p^{l_1}+p^{l_2}+p^{l_4}-1}{2}+(1-4x)^\frac{p^{l_1}+p^{l_3}+p^{l_4}-1}{2}+(1-4x)^\frac{p^{l_2}+p^{l_3}+p^{l_4}-1}{2}] \cr
&+\frac{(2-k)}{16}\,[(1-4x)^\frac{p^{l_1}+p^{l_2}}{2}+(1-4x)^\frac{p^{l_1}+p^{l_3}}{2}+(1-4x)^\frac{p^{l_2}+p^{l_3}}{2}+(1-4x)^\frac{p^{l_1}+p^{l_4}}{2}+(1-4x)^\frac{p^{l_2}+p^{l_4}}{2}+(1-4x)^\frac{p^{l_3}+p^{l_4}}{2}]\cr
&+\frac{k}{16}\,[(1-4x)^\frac{p^{l_1}-1}{2}+(1-4x)^\frac{p^{l_2}-1}{2}+(1-4x)^\frac{p^{l_3}-1}{2}+(1-4x)^\frac{p^{l_4}-1}{2}]+\frac{(2-k)}{16};
\end{split}
}
\end{equation}

\noindent see \cite[Section 5]{Fernando-2017-II} for detailed computation. 

This leads to the following immediate theorem. 

\begin{thm}\label{LNC}
$D_{p^{l_1}+p^{l_2}+p^{l_3}+p^{l_4},k}(1,x)$ is a PP of $\f_q$ if and only if the polynomial
\[ 
\tiny
{
\begin{split}
&(2-k)\,x^\frac{p^{l_1}+p^{l_2}+p^{l_3}+p^{l_4}}{2}+k\ [x^\frac{p^{l_1}+p^{l_2}+p^{l_3}-1}{2}+x^\frac{p^{l_1}+p^{l_2}+p^{l_4}-1}{2}+x^\frac{p^{l_1}+p^{l_3}+p^{l_4}-1}{2}+x^\frac{p^{l_2}+p^{l_3}+p^{l_4}-1}{2}] \cr
&+(2-k)\,[x^\frac{p^{l_1}+p^{l_2}}{2}+x^\frac{p^{l_1}+p^{l_3}}{2}+x^\frac{p^{l_2}+p^{l_3}}{2}+x^\frac{p^{l_1}+p^{l_4}}{2}+x^\frac{p^{l_2}+p^{l_4}}{2}+x^\frac{p^{l_3}+p^{l_4}}{2}]+k\,[x^\frac{p^{l_1}-1}{2}+x^\frac{p^{l_2}-1}{2}+x^\frac{p^{l_3}-1}{2}+x^\frac{p^{l_4}-1}{2}]
\end{split}
}
\]
is a PP of $\f_q$. 
\end{thm}

Let

\[ 
\tiny
{
\begin{split}
h(x)&=(2-k)\,x^\frac{p^{l_1}+p^{l_2}+p^{l_3}+p^{l_4}}{2}+k\ [x^\frac{p^{l_1}+p^{l_2}+p^{l_3}-1}{2}+x^\frac{p^{l_1}+p^{l_2}+p^{l_4}-1}{2}+x^\frac{p^{l_1}+p^{l_3}+p^{l_4}-1}{2}+x^\frac{p^{l_2}+p^{l_3}+p^{l_4}-1}{2}] \cr
&+(2-k)\,[x^\frac{p^{l_1}+p^{l_2}}{2}+x^\frac{p^{l_1}+p^{l_3}}{2}+x^\frac{p^{l_2}+p^{l_3}}{2}+x^\frac{p^{l_1}+p^{l_4}}{2}+x^\frac{p^{l_2}+p^{l_4}}{2}+x^\frac{p^{l_3}+p^{l_4}}{2}]\cr
&+k\,[x^\frac{p^{l_1}-1}{2}+x^\frac{p^{l_2}-1}{2}+x^\frac{p^{l_3}-1}{2}+x^\frac{p^{l_4}-1}{2}].
\end{split}
}
\]

\begin{rmk}
The cases in which $l_1=l_2=l_3=l_4=0$ and exactly three of $l_1, l_2, l_3$, and $l_4$ are zero, are completely covered in Section 3 of the present paper. 
\end{rmk}

\subsection{The case $p=3$, $k\neq 2$, $l_1\neq 0$, $l_2\neq 0$, $l_3=l_4=0$} \hspace*{\fill} \\

We first consider the case in which exactly two of $l_1, l_2, l_3$, and $l_4$ are zero. Let's consider the case $k=0$ first. 

\begin{thm}
Let $k=0$, $p=3$, and $n=p^{l_1}+p^{l_2}+p^{l_3}+p^{l_4}$. Assume that exactly two of $l_1, l_2, l_3$, and $l_4$ are zero and the two non zero $l_i$ have the same parity. Then $D_{n,k}(1,x)$ is not a PP of $\f_{p^e}$.
\end{thm}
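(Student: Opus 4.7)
The plan is to apply the proposition preceding this subsection, which reduces whether $D_{n,0}(1,x)$ is a PP of $\f_{3^e}$ to whether the explicit polynomial $h(x)$ is. Setting $k=0$ and $l_3=l_4=0$ in $h(x)$ causes several monomials to coincide (since $p^{l_3}=p^{l_4}=1$), and after collecting repeats I expect
$$
h(x)\,=\,2\Bigl[x^{(3^{l_1}+3^{l_2}+2)/2}+x^{(3^{l_1}+3^{l_2})/2}+2x^{(3^{l_1}+1)/2}+2x^{(3^{l_2}+1)/2}+x\Bigr].
$$
I would then exhibit two distinct points of $\f_{3^e}$ on which $h$ takes the same value; the three elements $0,1,-1$ are distinct in $\f_{3^e}$ because $p=3$, so they supply enough candidates for a collision.

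The key parity facts, all immediate from $3\equiv -1\pmod 4$, are: $(3^l+1)/2$ is even iff $l$ is odd, and $(3^{l_1}+3^{l_2})/2$ is odd iff $l_1,l_2$ share a parity (which is precisely the standing hypothesis). This means at $x=-1$ each monomial of $h$ becomes $\pm 1$ according to the parity of its exponent, and the hypothesis forces these parities to line up in a clean, uniform way depending on whether the two nonzero $l_i$'s are both odd or both even.

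I would split into two cases. If $l_1,l_2$ are both odd, then the exponents $(3^{l_1}+3^{l_2}+2)/2$, $(3^{l_1}+1)/2$ and $(3^{l_2}+1)/2$ are even while $(3^{l_1}+3^{l_2})/2$ and $1$ are odd; so $h(-1)\equiv 2(1-1+2+2-1)\equiv 0\pmod 3$, giving the collision $h(0)=h(-1)=0$. If $l_1,l_2$ are both positive even, then only $(3^{l_1}+3^{l_2}+2)/2$ is even and the other four exponents are odd; so $h(-1)\equiv 2(1-1-2-2-1)\equiv 2\pmod 3$, while $h(1)\equiv 2\cdot 7\equiv 2\pmod 3$, giving the collision $h(1)=h(-1)=2$. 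In either subcase $h$ is not a PP of $\f_{3^e}$, and the proposition then delivers the conclusion.

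The only real obstacle is bookkeeping: tracking how the hypothesis "same parity" controls the parity of $(3^{l_1}+3^{l_2})/2$ and of the individual $(3^{l_i}+1)/2$, and making sure the duplication caused by $l_3=l_4=0$ is handled consistently so that the coefficients $2$ in front of the repeated $x^{(3^{l_i}+1)/2}$ terms are not lost. Once those parities are tabulated, each case reduces to a one-line computation in $\f_3$.
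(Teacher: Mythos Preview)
Your proposal is correct and follows essentially the same approach as the paper: reduce via Proposition~5.1 to the explicit polynomial (the paper divides out the factor $2$ and calls it $g(x)$, but that is immaterial), compute $g(0)=0$, $g(1)=1$, and then use the parity of the $l_i$ to evaluate $g(-1)$, obtaining $g(-1)=1$ when both $l_i$ are even and $g(-1)=0$ when both are odd. Your parity bookkeeping and the resulting collisions match the paper's exactly.
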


\begin{proof}
Without loss of generality, let $l_1\neq 0$, $l_2\neq 0$ and $l_3=0=l_4$. Since $k=0$, note that $D_{n,k}(1,x)$ is a PP of $\f_{p^e}$ if and only if 
$$g(x)=x^{\frac{p^{l_1}+p^{l_2}}{2}+1}+x^\frac{p^{l_1}+p^{l_2}}{2}+2x^\frac{p^{l_1}+1}{2}+2x^\frac{p^{l_2}+1}{2}+x$$ 
is a PP of $\f_{p^e}$.

Now we claim that $g(x)$ is not a PP of $\f_{p^e}$. Note that $g(0)=0$ and $g(1)=1$. 

\textbf{Case 1.} Both $l_1$ and $l_2$ are even. 

Since both $l_1$ and $l_2$ are even, $\frac{3^{l_1}+1}{2}$ and $\frac{3^{l_2}+1}{2}$ are odd. Then $g(-1)=1$.

\textbf{Case 2.} Both $l_1$ and $l_2$ are odd. 

Since both $l_1$ and $l_2$ are odd, $\frac{3^{l_1}+1}{2}$ and $\frac{3^{l_2}+1}{2}$ are even. Then $g(-1)=0$.

This completes the proof. 

\end{proof}

We have the following immediate theorem. 

\begin{thm}
Let $k=0$, $p=3$, and $n=p^{l_1}+p^{l_2}+p^{l_3}+p^{l_4}$. Assume that exactly two of $l_1, l_2, l_3$, and $l_4$ are zero and the two non zero $l_i$ have different parity. Then $D_{n,k}(1,x)$ is a PP of $\f_{p^e}$ if and only if 
$$g(x)=x^{\frac{p^{l_1}+p^{l_2}}{2}+1}+x^\frac{p^{l_1}+p^{l_2}}{2}+2x^\frac{p^{l_1}+1}{2}+2x^\frac{p^{l_2}+1}{2}+x$$ 
is a PP of $\f_{p^e}$.
\end{thm}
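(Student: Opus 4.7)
The plan is to deduce this theorem immediately from the preceding proposition together with the simplification of $h(x)$ already carried out inside the proof of the previous theorem. Specifically, applying the proposition with $k=0$, $p=3$, and $l_3=l_4=0$ kills every term of $h(x)$ whose coefficient is $k$, and collapses the remaining six pair-exponents $(p^{l_i}+p^{l_j})/2$ onto only five distinct values: $(p^{l_1}+p^{l_2})/2+1$ (from the top degree), $(p^{l_1}+p^{l_2})/2$, $(p^{l_1}+1)/2$ with multiplicity $2$, $(p^{l_2}+1)/2$ with multiplicity $2$, and $1$ (coming from $(p^{l_3}+p^{l_4})/2$). Collecting like terms yields
\[
h(x) = 2x^{\frac{p^{l_1}+p^{l_2}}{2}+1} + 2x^{\frac{p^{l_1}+p^{l_2}}{2}} + 4x^{\frac{p^{l_1}+1}{2}} + 4x^{\frac{p^{l_2}+1}{2}} + 2x.
\]

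Next, I would reduce modulo $3$ (so $4 \equiv 1$ and $2\equiv -1$ in $\f_{3^e}$) and divide by the unit $2 \in \f_{3^e}^{\times}$, which preserves the permutation property. A direct check shows that the resulting polynomial is precisely $g(x)$. Combined with the proposition, this gives the desired biconditional $D_{n,0}(1,x)$ is a PP of $\f_{3^e}$ iff $g(x)$ is a PP of $\f_{3^e}$.

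It is worth noting that the opposite-parity hypothesis plays no role in this reduction; it only serves to exclude the regime of the previous theorem, where evaluation of $g$ at $\pm 1$ produces a collision with $g(0)=0$ or $g(1)=1$ and thus forces non-PP. Under the present hypothesis a short parity computation shows that $(p^{l_1}+p^{l_2})/2$ is even (since $3^{l_1}+3^{l_2}\equiv 1+3 \equiv 0 \pmod 4$) while exactly one of $(p^{l_1}+1)/2$, $(p^{l_2}+1)/2$ is odd, so $g(-1)=-1 \neq 0, 1$ in $\f_3$. Hence the elementary obstruction disappears, and the theorem records exactly this: no further reduction is available beyond the biconditional.

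I do not anticipate any genuine obstacle: the argument consists entirely of bookkeeping the collapsed exponents of $h(x)$ and a scalar normalization modulo $3$. The substantive problem — deciding exactly when $g(x)$ itself is a PP of $\f_{3^e}$ in the opposite-parity case — is left open by the statement.
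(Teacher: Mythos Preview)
Your proposal is correct and matches the paper's treatment: the paper states this theorem as ``immediate'' with no separate proof, relying on the reduction $D_{n,0}(1,x)\ \text{PP} \Leftrightarrow g(x)\ \text{PP}$ already derived in the proof of the preceding same-parity theorem, which is precisely the bookkeeping you carry out. Your added remark that the opposite-parity hypothesis only serves to remove the $g(-1)$ collision, together with the explicit verification $g(-1)=-1$, is a helpful elaboration that the paper omits.
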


Now let's consider the case where $k=1$. 

\begin{thm}
Let $k=1$, $p=3$, and $n=p^{l_1}+p^{l_2}+p^{l_3}+p^{l_4}$. Assume that exactly two of $l_1, l_2, l_3$, and $l_4$ are zero and the two non-zero $l_i$ are both odd or they have different parity. Then $D_{n,k}(1,x)$ is not a PP of $\f_{p^e}$.
\end{thm}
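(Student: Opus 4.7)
The plan is to specialize the polynomial $h(x)$ from the preceding proposition to $p=3$, $k=1$, and (without loss of generality) $l_3=l_4=0$, reduce the result in $\mathbb{F}_3[x]$, and then show non-injectivity on the prime subfield $\{0,1,-1\}$. Setting $p^{l_3}=p^{l_4}=1$ collapses several of the 21 monomial terms in $h(x)$ to common exponents. With $k=1$ and $2-k=1$, a careful count of multiplicities shows that the monomials $x^{(3^{l_1}+3^{l_2})/2}$, $x^{(3^{l_1}+1)/2}$, and $x^{(3^{l_2}+1)/2}$ each occur with total coefficient $3\equiv 0\pmod 3$, while $x^{(3^{l_1}+3^{l_2}+2)/2}$, $x^{(3^{l_1}-1)/2}$, $x^{(3^{l_2}-1)/2}$, $x$ each survive with coefficient $1$, plus a constant $2$. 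Since a constant shift does not affect PP-ness, $D_{n,k}(1,x)$ is a PP of $\mathbb{F}_{3^e}$ if and only if
\[
g(x)=x^{(3^{l_1}+3^{l_2}+2)/2}+x^{(3^{l_1}-1)/2}+x^{(3^{l_2}-1)/2}+x
\]
is a PP of $\mathbb{F}_{3^e}$.

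Next, I would use the fact that a PP must be injective on $\mathbb{F}_3\subseteq \mathbb{F}_{3^e}$. Observe $g(1)=1+1+1+1=4\equiv 1\pmod 3$, so it suffices to prove $g(-1)=1$ under each of the stated parity hypotheses. Since $3\equiv -1\pmod 4$, the integer $(3^l-1)/2$ has the same parity as $l$; consequently the top exponent, which equals $(3^{l_1}-1)/2+(3^{l_2}-1)/2+2$, has the parity of $l_1+l_2$.

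A short case split then finishes the argument. If both $l_1$ and $l_2$ are odd, the three $l_i$-dependent exponents have parities (even, odd, odd), so $g(-1)=1+(-1)+(-1)+(-1)=-2\equiv 1\pmod 3$. If $l_1$ and $l_2$ have different parities (WLOG $l_1$ even, $l_2$ odd), the parities are (odd, even, odd), giving $g(-1)=(-1)+1+(-1)+(-1)=-2\equiv 1\pmod 3$. In both cases $g(-1)=g(1)$, so $g$ is not injective on $\mathbb{F}_3$ and therefore not a PP of $\mathbb{F}_{3^e}$, proving the theorem.

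The only real obstacle is bookkeeping: one must carefully tally how many of the four triple-sum exponents, six pair-sum exponents, and four single exponents in $h(x)$ collapse to each value once $p^{l_3}=p^{l_4}=1$ is substituted, and then reduce the resulting coefficients modulo $3$. This is mechanical but must be done correctly; the remaining parity analysis is routine.
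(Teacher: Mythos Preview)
Your proposal is correct and follows essentially the same approach as the paper: specialize $h(x)$ with $l_3=l_4=0$ and $k=1$, reduce modulo $3$ to obtain $g(x)=x^{(3^{l_1}+3^{l_2})/2+1}+x^{(3^{l_1}-1)/2}+x^{(3^{l_2}-1)/2}+x$, and then show $g(1)=g(-1)=1$ via a parity case split. You supply more detail than the paper on the coefficient bookkeeping and on the parity computation of $g(-1)$, but the strategy is identical.
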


\begin{proof}
Without loss of generality, let $l_1\neq 0$, $l_2\neq 0$ and $l_3=0=l_4$. Since $k=1$, note that $D_{n,k}(1,x)$ is a PP of $\f_{p^e}$ if and only if 
$$g(x)=x^{\frac{p^{l_1}+p^{l_2}}{2}+1}+x^\frac{p^{l_1}-1}{2}+x^\frac{p^{l_2}-1}{2}+x$$ 
is a PP of $\f_{p^e}$.

Now we claim that $g(x)$ is not a PP of $\f_{p^e}$. Note that $g(0)=0$ and $g(1)=1$. The proof follows from the fact that  $g(-1)=1$ when $l_1$ and $l_2$ have different parity, and when both $l_1$ and $l_2$ are odd. 

\end{proof}

\begin{thm}
Let $k=1$, $p=3$, and $n=p^{l_1}+p^{l_2}+p^{l_3}+p^{l_4}$. Assume that exactly two of $l_1, l_2, l_3$, and $l_4$ are zero and the two non-zero $l_i$ are both even. Then $D_{n,k}(1,x)$ is a PP of $\f_{p^e}$ if and only if 
$$g(x)=x^{\frac{p^{l_1}+p^{l_2}}{2}+1}+x^\frac{p^{l_1}-1}{2}+x^\frac{p^{l_2}-1}{2}+x$$ 
is a PP of $\f_{p^e}$.
\end{thm}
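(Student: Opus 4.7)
The plan is to invoke the Proposition established earlier in this section, which reduces the permutation question for $D_{p^{l_1}+p^{l_2}+p^{l_3}+p^{l_4},k}(1,x)$ to the permutation question for the $15$-term polynomial $h(x)$. Once the specific values $k=1$, $p=3$, and $l_3=l_4=0$ are plugged in, the bulk of the work is algebraic simplification in characteristic $3$: after substitution, $h(x)$ should collapse to $g(x)$ up to an additive constant, and constants do not affect the property of being a permutation polynomial.

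Concretely, I would first set $p^{l_3}=p^{l_4}=1$ in $h(x)$ and go through the three groups of exponents systematically. The four ``triple'' exponents $\frac{p^{l_i}+p^{l_j}+p^{l_m}-1}{2}$ reduce to either $\frac{p^{l_1}+p^{l_2}}{2}$ (twice) or $\frac{p^{l_1}+1}{2}$ and $\frac{p^{l_2}+1}{2}$; the six ``pair'' exponents $\frac{p^{l_i}+p^{l_j}}{2}$ reduce to $\frac{p^{l_1}+p^{l_2}}{2}$, $\frac{p^{l_1}+1}{2}$ (twice), $\frac{p^{l_2}+1}{2}$ (twice), and $1$; and the four ``single'' exponents reduce to $\frac{p^{l_1}-1}{2}$, $\frac{p^{l_2}-1}{2}$, $0$, $0$. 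Substituting $k=1$ so that $(2-k)=k=1$, I would collect like terms and observe that three monomials, namely $x^{(p^{l_1}+p^{l_2})/2}$, $x^{(p^{l_1}+1)/2}$, and $x^{(p^{l_2}+1)/2}$, each appear three times; since we work over $\mathbb{F}_{3^e}$, these contributions vanish.

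What survives is exactly
\[
h(x) \;=\; x^{\frac{p^{l_1}+p^{l_2}}{2}+1} + x^{\frac{p^{l_1}-1}{2}} + x^{\frac{p^{l_2}-1}{2}} + x + 2 \;=\; g(x) + 2.
\]
Since translation by a constant is a bijection of $\mathbb{F}_{p^e}$, $h(x)$ is a permutation polynomial of $\mathbb{F}_{p^e}$ if and only if $g(x)$ is. Combining this with the Proposition yields the stated equivalence.

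The step requiring the most care is the bookkeeping when collapsing the $15$ monomials of $h(x)$: one must track the multiplicities produced by $p^{l_3}=p^{l_4}=1$ across the three blocks and confirm that precisely those monomials whose coefficients become $3$ are the ones that do \emph{not} appear in $g(x)$. Note that, unlike the previous theorem (where both $l_1,l_2$ odd or of different parity led to $g(-1)\in\{0,1\}=\{g(0),g(1)\}$ and forced non-permutation), in the present case $l_1,l_2$ both even gives $g(-1)=2$, so no immediate obstruction arises; this is consistent with only an ``if and only if'' reduction being available here, with no further collapse.
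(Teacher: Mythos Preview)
Your argument is correct and follows essentially the same route as the paper: the paper states this theorem without proof, since the reduction $D_{n,1}(1,x)\ \text{PP} \Longleftrightarrow g(x)\ \text{PP}$ is exactly the one already carried out (via Proposition~5.1 and the same characteristic-$3$ collapse) in the proof of the immediately preceding theorem. Your write-up simply makes that simplification explicit, including the observation that $g(-1)=2$ in the both-even case so no further obstruction is available.
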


\subsection{The case $p=3$, $k\neq 2$,  $l_1\neq 0$, $l_2\neq 0$, $l_3\neq 0$, $l_4=0$}  \hspace*{\fill} \\

We now consider the case in which exactly one of $l_1, l_2, l_3$, and $l_4$ is zero. First, consider the case where $k=0$. 

\begin{thm}
Let $k=0$, $p=3$, and $n=p^{l_1}+p^{l_2}+p^{l_3}+p^{l_4}$. Assume that exactly one of $l_1, l_2, l_3$, and $l_4$ is zero. Assume that non-zero $l_i$ are all even or exactly two are odd. Then $D_{n,k}(1,x)$ is not a PP of $\f_{p^e}$.
\end{thm}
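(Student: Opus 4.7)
The plan is to follow the template of the preceding theorems in this subsection. By the proposition that opens this section, $D_{n,0}(1,x)$ is a PP of $\f_{3^e}$ if and only if the associated polynomial $h(x)$ is. Setting $k=0$ kills the singleton and 3-sum exponent families in $h$, and WLOG setting $l_4=0$ (so $p^{l_4}=1$) collapses the remaining pair exponents. Since the common factor $2$ is a unit in $\f_3$, $h$ is a PP if and only if the 7-term polynomial
$$g(x)=x^{(p^{l_1}+p^{l_2}+p^{l_3}+1)/2}+\sum_{1\leq i<j\leq 3}x^{(p^{l_i}+p^{l_j})/2}+\sum_{i=1}^{3}x^{(p^{l_i}+1)/2}$$
is a PP of $\f_{3^e}$. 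As in the previous theorems I will show $g$ fails injectivity by comparing its values at $0,1,-1$.

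First I record $g(0)=0$ and $g(1)=7\equiv 1\pmod 3$. I then compute $g(-1)$, for which the parities of the seven exponents are decisive. Since $3^a\equiv 1\pmod 8$ when $a$ is even and $3^a\equiv 3\pmod 8$ when $a$ is odd, the bookkeeping rules are: $(3^a+3^b)/2$ is odd iff $a\equiv b\pmod 2$; $(3^a+1)/2$ is odd iff $a$ is even; and, writing $r$ for the number of odd entries among $l_1,l_2,l_3$, the top exponent $(3^{l_1}+3^{l_2}+3^{l_3}+1)/2\equiv r+2\pmod 4$, so it is even when $r$ is even, which covers both hypotheses ($r=0$ and $r=2$).

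In case (a), all three non-zero $l_i$ are even, hence the top exponent is even and each of the remaining six exponents is odd; this gives $g(-1)=1-6=-5\equiv 1\pmod 3$. Since $1=g(1)$ and $1\neq -1$ in $\f_3$, $g$ is not injective. In case (b), WLOG $l_1,l_2$ are odd and $l_3$ is even, so the top exponent is even; among the three pair exponents $(p^{l_i}+p^{l_j})/2$, the one for $\{1,2\}$ is odd while those for $\{1,3\}$ and $\{2,3\}$ are even; among the three singleton-type exponents, $(p^{l_1}+1)/2$ and $(p^{l_2}+1)/2$ are even while $(p^{l_3}+1)/2$ is odd. Thus five exponents are even and two are odd, giving $g(-1)=5-2=3\equiv 0\pmod 3$. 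Since $0=g(0)$ and $0\neq -1$ in $\f_3$, again $g$ is not injective.

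The only real obstacle is keeping the parity bookkeeping straight across seven exponents in two cases; once the $\pmod 8$ reduction for powers of $3$ is in hand, the argument is a short finite check identical in spirit to the two-exponent arguments used earlier in this subsection.
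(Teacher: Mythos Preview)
Your proof is correct and follows essentially the same route as the paper: reduce to the seven-term polynomial $g(x)$, record $g(0)=0$ and $g(1)=1$, and then show $g(-1)\in\{0,1\}$ by case analysis on the parities of the nonzero $l_i$. You supply the explicit $\pmod 8$ parity bookkeeping that the paper leaves to the reader, but the argument and the values obtained are identical.
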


\begin{proof}
Without loss of generality, let $l_1\neq 0$, $l_2\neq 0$, $l_3\neq 0$ and $l_4=0$. Since $k=0$, note that $D_{n,k}(1,x)$ is a PP of $\f_{p^e}$ if and only if 
$$g(x)=x^{\frac{p^{l_1}+p^{l_2}+p^{l_3}+1}{2}}+x^{\frac{p^{l_1}+p^{l_2}}{2}}+x^{\frac{p^{l_1}+p^{l_3}}{2}}+x^{\frac{p^{l_2}+p^{l_3}}{2}}+x^\frac{p^{l_1}+1}{2}+x^\frac{p^{l_2}+1}{2}+x^\frac{p^{l_3}+1}{2}$$ 
is a PP of $\f_{p^e}$.

Now we claim that $g(x)$ is not a PP of $\f_{p^e}$. Note that $g(0)=0$ and $g(1)=1$. The proof follows from the fact that $g(-1)=1$ when $l_1$, $l_2$, and $l_3$ are all even, and $g(-1)=0$ when exactly two of $l_1$, $l_2$, and $l_3$ are odd.

\end{proof}

Now let's consider the case where $k=1$. 

\begin{thm}
Let $k=1$, $p=3$, and $n=p^{l_1}+p^{l_2}+p^{l_3}+p^{l_4}$. Assume that exactly one of $l_1, l_2, l_3$, and $l_4$ is zero. Then $D_{n,k}(1,x)$ is not a PP of $\f_{p^e}$.
\end{thm}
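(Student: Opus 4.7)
The plan is to invoke the proposition characterizing when $D_{p^{l_1}+p^{l_2}+p^{l_3}+p^{l_4},k}(1,x)$ is a PP of $\f_{p^e}$ via the associated polynomial $h(x)$, and then to exhibit, in $\f_{3^e}$, a collision of $h$ among the three elements $0,1,-1$.

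Since $h(x)$ is symmetric in $l_1,l_2,l_3,l_4$, I may assume without loss of generality that $l_4=0$ and $l_1,l_2,l_3\neq 0$. Setting $k=1$, $p=3$, and $l_4=0$ in the formula for $h$, the exponents $(p^{l_i}+p^{l_j}+p^{l_4}-1)/2$ in the cubic-minus-one block collapse to $(3^{l_i}+3^{l_j})/2$, which coincide with three of the pair-sum exponents; combining these pairs gives the combined coefficient $k+(2-k)=2\equiv -1\pmod 3$. The exponent $(p^{l_4}-1)/2$ collapses to $0$, contributing a constant $1$. After this simplification,
\begin{equation*}
\begin{split}
h(x) &= x^{(3^{l_1}+3^{l_2}+3^{l_3}+1)/2}+x^{(3^{l_1}+3^{l_2}+3^{l_3}-1)/2}\\
&\quad -x^{(3^{l_1}+3^{l_2})/2}-x^{(3^{l_1}+3^{l_3})/2}-x^{(3^{l_2}+3^{l_3})/2}\\
&\quad +x^{(3^{l_1}+1)/2}+x^{(3^{l_2}+1)/2}+x^{(3^{l_3}+1)/2}\\
&\quad +x^{(3^{l_1}-1)/2}+x^{(3^{l_2}-1)/2}+x^{(3^{l_3}-1)/2}+1.
\end{split}
\end{equation*}
Two evaluations are immediate: $h(0)=1$ from the constant term, and $h(1)$ equals the sum of the twelve coefficients, namely $2-3+3+3+1=6\equiv 0\pmod 3$.

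The remaining task is to compute $h(-1)$ in each parity pattern of $(l_1,l_2,l_3)$. Using the fact that $3^l\equiv 1\pmod 4$ when $l$ is even and $3^l\equiv 3\pmod 4$ when $l$ is odd, the parity of every exponent above is determined by the number $m\in\{0,1,2,3\}$ of odd entries among $l_1,l_2,l_3$. A short case-by-case tally yields $h(-1)\equiv 1\pmod 3$ when $m\in\{0,3\}$ (so that $h(-1)=h(0)$) and $h(-1)\equiv 0\pmod 3$ when $m\in\{1,2\}$ (so that $h(-1)=h(1)$). In either situation $h$ fails to be injective on $\f_{3^e}$, and therefore by the proposition $D_{n,1}(1,x)$ is not a PP of $\f_{3^e}$.

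The only real bookkeeping is tracking the parity of the eleven nontrivial exponents in the four cases $m\in\{0,1,2,3\}$; each reduces to summing a handful of $\pm 1$'s modulo $4$, so the case analysis is routine rather than a genuine obstacle.
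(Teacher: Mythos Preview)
Your proposal is correct and follows essentially the same approach as the paper: reduce to the auxiliary polynomial via Proposition~5.1, specialize $k=1$, $p=3$, $l_4=0$, and then exhibit a collision among the values at $0$, $1$, $-1$ by a case analysis on the parities of $l_1,l_2,l_3$. The only cosmetic difference is that you retain the constant term $1$ (coming from $x^{(p^{l_4}-1)/2}=x^0$), so your values are shifted by $1$ relative to the paper's $g(x)$; the paper's $g(0)=0$, $g(1)=2$, and the same four parity cases yield $g(-1)\in\{0,2\}$.
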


\begin{proof}
Without loss of generality, let $l_1\neq 0$, $l_2\neq 0$, $l_3\neq 0$ and $l_4=0$. Since $k=1$, $D_{n,k}(1,x)$ is a PP of $\f_{p^e}$ if and only if 
\[
\begin{split}
&g(x)=x^{\frac{p^{l_1}+p^{l_2}+p^{l_3}+1}{2}}+x^{\frac{p^{l_1}+p^{l_2}+p^{l_3}-1}{2}}+2x^{\frac{p^{l_1}+p^{l_2}}{2}}+2x^{\frac{p^{l_1}+p^{l_3}}{2}}+2x^{\frac{p^{l_2}+p^{l_3}}{2}}+x^\frac{p^{l_1}+1}{2}+x^\frac{p^{l_2}+1}{2}+x^\frac{p^{l_3}+1}{2}\cr
&+x^\frac{p^{l_1}-1}{2}+x^\frac{p^{l_2}-1}{2}+x^\frac{p^{l_3}-1}{2}
\end{split}
\]
is a PP of $\f_{p^e}$.

Now we claim that $g(x)$ is not a PP of $\f_{p^e}$. Note that $g(0)=0$ and $g(1)=2$. 

\textbf{Case 1.} $l_1$, $l_2$, and $l_3$ are all even. Then $g(-1)=0$. 

\textbf{Case 2.} $l_1$, $l_2$, and $l_3$ are all odd. Then $g(-1)=0$.

\textbf{Case 3.} Exactly one of $l_1, l_2$, and $l_3$ is odd. Then $g(-1)=2$. 

\textbf{Case 4.} Exactly two of $l_1, l_2$, and $l_3$ are odd. Then $g(-1)=2$. 

This completes the proof. 

\end{proof}

\subsection{The case $k\neq 2$,  $l_1l_2l_3l_4\neq 0$}  \hspace*{\fill} \\

Now we consider the case in which $l_1\neq 0, l_2 \neq 0, l_3\neq 0$, and $l_4\neq 0$. First, consider the case where $k=0$.

\begin{thm}
Let $k=0$, $p=3$, and $n=p^{l_1}+p^{l_2}+p^{l_3}+p^{l_4}$. Assume that $l_1\neq 0, l_2 \neq 0, l_3\neq 0$, and $l_4\neq 0$. If
\begin{enumerate}
\item all $l_1, l_2, l_3$, and $l_4$ are odd,
\item all $l_1, l_2, l_3$, and $l_4$ are even, or
\item exactly two of $l_1, l_2, l_3$, and $l_4$ are odd, 
\end{enumerate}
then $D_{n,k}(1,x)$ is not a PP of $\f_{p^e}$.
\end{thm}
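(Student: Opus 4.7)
The plan is to mimic the approach used in the other $k=0$, $p=3$ theorems of this section. By the proposition immediately preceding, $D_{n,0}(1,x)$ is a PP of $\f_{3^e}$ if and only if $h(x)$ is; with $k=0$ all $k$-coefficients vanish, so $h(x)=2\,g(x)$, where
\[ g(x) = x^{\frac{p^{l_1}+p^{l_2}+p^{l_3}+p^{l_4}}{2}} + \sum_{1\le i<j\le 4} x^{\frac{p^{l_i}+p^{l_j}}{2}}. \]
Since $2\in \f_3^{\times}$, it suffices to prove that $g$ is not a PP of $\f_{3^e}$, and I would do so by exhibiting a collision among the three values $g(0)$, $g(1)$, $g(-1)$.

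Two of these are immediate. Because every $l_i>0$, each of the seven exponents is a positive integer, so $g(0)=0$. Each monomial evaluates to $1$ at $x=1$, so $g(1)=7\equiv 1 \pmod 3$. The value $g(-1)$ depends on the parities of the seven exponents, and the key arithmetic observation is that $3^l\equiv 1\pmod 4$ when $l$ is even and $3^l\equiv 3\pmod 4$ when $l$ is odd. From this I would read off that $\frac{p^{l_i}+p^{l_j}}{2}$ is odd precisely when $l_i$ and $l_j$ have the same parity, and that $\frac{p^{l_1}+p^{l_2}+p^{l_3}+p^{l_4}}{2}$ is even precisely when an even number of the $l_i$'s are odd.

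With that in hand, the three cases reduce to a short counting step. In case (1), all four $l_i$'s are odd, so every pair matches parity and the big exponent is even; thus $g(-1)=1+6(-1)\equiv 1\pmod 3=g(1)$. Case (2) is identical: every pair matches parity, the big exponent is even, and $g(-1)\equiv 1=g(1)$. In case (3), exactly two $l_i$'s are odd, so among the six pairs only the ``both odd'' and ``both even'' pairs contribute $-1$ while the four mixed pairs contribute $+1$, and the big exponent is still even; hence $g(-1)=1+2(-1)+4(1)=3\equiv 0\pmod 3 = g(0)$. In every case $g$ collides on $\{0,1,-1\}\subset \f_{3^e}$, so it is not a PP, and neither is $D_{n,0}(1,x)$.

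The only step requiring real care is the parity bookkeeping: reading off the parity of each exponent from the parities of the $l_i$'s via $3^l\pmod 4$, and correctly counting how many of the six unordered pairs have matched parities in each of the three configurations. Everything else — the reduction through the preceding proposition and the evaluations at $0$ and $1$ — is entirely routine and parallels the earlier theorems in this section.
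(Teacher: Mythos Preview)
Your proposal is correct and follows essentially the same approach as the paper: reduce to the polynomial $g(x)$ via the preceding proposition, compute $g(0)=0$ and $g(1)\equiv 1$, and then in each of the three parity configurations show that $g(-1)$ collides with one of these. Your explicit justification of the exponent parities via $3^{l}\pmod 4$ is a welcome bit of detail that the paper leaves implicit, but the argument is otherwise identical.
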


\begin{proof}
Assume that $l_1\neq 0, l_2 \neq 0, l_3\neq 0$, and $l_4\neq 0$. Since $k=0$, $D_{n,k}(1,x)$ is a PP of $\f_{p^e}$ if and only if 
\[
\begin{split}
&g(x)=x^{\frac{p^{l_1}+p^{l_2}+p^{l_3}+p^{l_4}}{2}}+x^{\frac{p^{l_1}+p^{l_2}}{2}}+x^{\frac{p^{l_1}+p^{l_3}}{2}}+x^{\frac{p^{l_2}+p^{l_3}}{2}}+x^{\frac{p^{l_1}+p^{l_4}}{2}}+x^\frac{p^{l_2}+p^{l_4}}{2}+x^\frac{p^{l_3}+p^{l_4}}{2}
\end{split}
\]
is a PP of $\f_{p^e}$.

Now we claim that $g(x)$ is not a PP of $\f_{p^e}$. Note that $g(0)=0$ and $g(1)=1$. 

\textbf{Case 1.} $l_1$, $l_2$, $l_3$, and $l_4$ are all even. Then $g(-1)=1$. 

\textbf{Case 2.} $l_1$, $l_2$, $l_3$, and $l_4$ are all odd.  Then $g(-1)=1$.

\textbf{Case 3.} Exactly two of $l_1$, $l_2$, $l_3$, and $l_4$ are odd. Then $g(-1)=0$. 

This completes the proof. 

\end{proof}

Let's consider the case where $k=1$. 

\begin{thm}
Let $k=1$, $p=3$, and $n=p^{l_1}+p^{l_2}+p^{l_3}+p^{l_4}$. If $l_1\neq 0, l_2 \neq 0, l_3\neq 0$, and $l_4\neq 0$, then $D_{n,k}(1,x)$ is not a PP of $\f_{p^e}$.
\end{thm}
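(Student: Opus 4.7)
My plan is to apply the proposition displayed earlier in this section, which reduces the question of whether $D_{n,1}(1,x)$ permutes $\f_{3^e}$ to the question of whether the explicit polynomial $h(x)$ (with $k=1$) permutes $\f_{3^e}$. Substituting $k=1$ makes every coefficient equal to $2-k=1$ or $k=1$, so over $\f_{3^e}$ the polynomial becomes
\[
g(x)=x^{\frac{3^{l_1}+3^{l_2}+3^{l_3}+3^{l_4}}{2}}+\sum_{\text{3-subsets}} x^{\frac{3^{l_i}+3^{l_j}+3^{l_r}-1}{2}}+\sum_{\text{2-subsets}} x^{\frac{3^{l_i}+3^{l_j}}{2}}+\sum_{i=1}^{4} x^{\frac{3^{l_i}-1}{2}},
\]
where the three sums run, respectively, over $4$, $6$, and $4$ terms.

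The key observation is that evaluating $g$ at the two simplest points already produces a collision, so no parity case analysis is needed at all. Since each $l_i\geq 1$, every exponent is at least $\tfrac{3-1}{2}=1$, hence all monomials vanish at $x=0$ and $g(0)=0$. Evaluating at $x=1$, each monomial contributes $1$, so
\[
g(1)=1+4+6+4=15\equiv 0\pmod{3}.
\]
Thus $g(0)=g(1)=0$ with $0\neq 1$ in $\f_{3^e}$, so $g$ is not injective and therefore not a permutation of $\f_{3^e}$. By the proposition, $D_{n,1}(1,x)$ is not a PP of $\f_{3^e}$.

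The only thing to verify carefully is that the coefficient count $1+4+6+4=15=\binom{4}{0}+\binom{4}{1}+\binom{4}{2}+\binom{4}{3}+\binom{4}{4}-\binom{4}{4}$ is indeed a multiple of $3$ (which it is, since it equals $2^4-1$, and one checks $15\equiv 0\pmod 3$ directly). I do not expect any genuine obstacle: the earlier cases in this subsection required splitting into parities of the $l_i$'s because the evaluation at $x=-1$ depended on whether the exponents $\tfrac{3^{l_i}+1}{2}$, $\tfrac{3^{l_i}-1}{2}$, etc., were odd or even; here the collision at $\{0,1\}$ bypasses that issue entirely, which is precisely why the theorem's statement is unconditional on the parities of $l_1,l_2,l_3,l_4$.
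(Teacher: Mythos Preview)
Your argument is correct and is precisely the paper's own proof: reduce via the proposition to the auxiliary polynomial $g(x)$ with all fifteen coefficients equal to $1$, and observe $g(0)=0=g(1)$ in $\f_{3^e}$. The paper records this in one line (``clearly $g(x)$ is not a PP since $g(0)=0=g(1)$''); your explicit verification that every exponent is positive and that $1+4+6+4=15\equiv 0\pmod 3$ simply fills in the details.
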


\begin{proof}
Assume that $l_1\neq 0, l_2 \neq 0, l_3\neq 0$, and $l_4\neq 0$. Since $k=1$, $D_{n,k}(1,x)$ is a PP of $\f_{p^e}$ if and only if 
\[ 
\tiny
{
\begin{split}
g(x)&=x^\frac{p^{l_1}+p^{l_2}+p^{l_3}+p^{l_4}}{2}+x^\frac{p^{l_1}+p^{l_2}+p^{l_3}-1}{2}+x^\frac{p^{l_1}+p^{l_2}+p^{l_4}-1}{2}+x^\frac{p^{l_1}+p^{l_3}+p^{l_4}-1}{2}+x^\frac{p^{l_2}+p^{l_3}+p^{l_4}-1}{2} \cr
&+x^\frac{p^{l_1}+p^{l_2}}{2}+x^\frac{p^{l_1}+p^{l_3}}{2}+x^\frac{p^{l_2}+p^{l_3}}{2}+x^\frac{p^{l_1}+p^{l_4}}{2}+x^\frac{p^{l_2}+p^{l_4}}{2}+x^\frac{p^{l_3}+p^{l_4}}{2}\cr
&+x^\frac{p^{l_1}-1}{2}+x^\frac{p^{l_2}-1}{2}+x^\frac{p^{l_3}-1}{2}+x^\frac{p^{l_4}-1}{2}
\end{split}
}
\]
is a PP of $\f_{p^e}$. But, clearly $g(x)$ is not a PP of $\f_{p^e}$ since $g(0)=0=g(1)$. 
\end{proof}

Let $p>3$ and $a$ be a quadratic residue of $p$. Then $a=\beta^2$ for some $1\leq \beta \leq p-1$. We have $$h(a)=h(\beta^2)=(2-k)\beta^4+2(6-k)\beta^2+4k=(2-k)a^2+2(6-k)a+4k.$$ Then we have the following.

\begin{thm}
Let $p\geq 5$, $0\leq k\leq p-1$ with $k\neq 2$. Assume that 
\begin{enumerate}
\item exactly one of $l_1, l_2, l_3$, and $l_4$ is non-zero, and 
\item $a$ is a quadratic residue modulo $p$ which satisfies $(2-k)a^2+2(6-k)a+k=0$.
\end{enumerate} 
Then $D_{n,k}(1,x)$ is not a PP of $\f_{p^e}$.
\end{thm}

\begin{proof}
Since exactly one of $l_1, l_2, l_3$, and $l_4$ is non-zero, we have $h(0)=3k$. The proof follows from the fact that 
$$h(a)=(2-k)a^2+2(6-k)a+4k=3k=h(0).$$
\end{proof}

\begin{thm}
Let $p\geq 5$, $0\leq k\leq p-1$ with $k\neq 2$. Assume that 
\begin{enumerate}
\item exactly two of $l_1, l_2, l_3$, and $l_4$ are non-zero, and 
\item $a$ is a quadratic residue modulo $p$ which satisfies $(2-k)a^2+2(6-k)a+2k=0$.
\end{enumerate} 
Then $D_{n,k}(1,x)$ is not a PP of $\f_{p^e}$.
\end{thm}

\begin{proof}
Since exactly two of $l_1, l_2, l_3$, and $l_4$ are non-zero, we have $h(0)=2k$. The proof follows from the fact that 
$$h(a)=(2-k)a^2+2(6-k)a+4k=2k=h(0).$$
\end{proof}

\begin{thm}
Let $p\geq 5$, $0\leq k\leq p-1$ with $k\neq 2$. Assume that 
\begin{enumerate}
\item exactly three of $l_1, l_2, l_3$, and $l_4$ are non-zero, and
\item $a$ is a quadratic residue modulo $p$ which satisfies $(2-k)a^2+2(6-k)a+3k=0$.
\end{enumerate} 
Then $D_{n,k}(1,x)$ is not a PP of $\f_{p^e}$.
\end{thm}

\begin{proof}
Since exactly three of $l_1, l_2, l_3$, and $l_4$ are non-zero, we have $h(0)=k$. The proof follows from the fact that 
$$h(a)=(2-k)a^2+2(6-k)a+4k=k=h(0).$$
\end{proof}

\begin{thm}
Let $p\geq 5$, $0\leq k\leq p-1$ with $k\neq 2$. Assume that 
\begin{enumerate}
\item $l_1, l_2, l_3$, and $l_4$ are non-zero, and 
\item $a$ is a quadratic residue modulo $p$ which satisfies $(2-k)a^2+2(6-k)a+4k=0$. 
\end{enumerate} 
Then $D_{n,k}(1,x)$ is not a PP of $\f_{p^e}$.
\end{thm}

\begin{proof}
Since $l_1, l_2, l_3$, and $l_4$ are all non-zero, we have $h(0)=0$. Since $a$ is a quadratic residue modulo $p$ such that the equation $(2-k)a^2+2(6-k)a+4k=0$ has a solution in $\mathbb{F}_p$, the proof follows from the fact that 
$$h(a)=0=h(0).$$
\end{proof}

\begin{thm}
Let $p\geq 5$, $0\leq k\leq p-1$ with $k\neq 2$. Assume that $l_1=l_2=l_3=l_4=0$. Then $D_{n,k}(1,x)$ is not a PP of $\f_{p^e}$.
\end{thm}

\begin{proof}
Since $l_1=l_2=l_3=l_4=0$, we have 
$$h(x)=(2-k)x^2+2(6-k)x+4k.$$
If $k\neq 6$, the proof follows from the fact that $$h(0)=4k=h\Big(\frac{2(k-6)}{(2-k)}\Big).$$
If $k=6$, the proof follows from the fact that $h(x)=-4x^2+24$ is not a PP of $\f_{p^e}$.
\end{proof}




\subsection{The case $k=2$}

\begin{thm}
Let $p\geq 3$ and $k=2$. Assume that 
\begin{enumerate}
\item exactly two of $l_1, l_2, l_3$, and $l_4$ are non-zero, and 
\item $(-\frac{1}{2})$ is a quadratic reside modulo $p$. 
\end{enumerate} 
Then $D_{n,k}(1,x)$ is not a PP of $\f_{p^e}$.
\end{thm}

\begin{proof}
Since exactly two of $l_1, l_2, l_3$, and $l_4$ are non-zero, we have $h(0)=4$. The proof follows from the fact that 
$$h(-1/2)=4=h(0).$$
\end{proof}

\begin{thm}
Let $p\geq 5$ and $k=2$. Assume that 
\begin{enumerate}
\item exactly three of $l_1, l_2, l_3$, and $l_4$ are non-zero, and 
\item $(-\frac{3}{4})$ is a quadratic reside modulo $p$. 
\end{enumerate} 
Then $D_{n,k}(1,x)$ is not a PP of $\f_{p^e}$.
\end{thm}

\begin{proof}
Since exactly three of $l_1, l_2, l_3$, and $l_4$ are non-zero, we have $h(0)=2$. The proof follows from the fact that 
$$h(-3/4)=2=h(0).$$
\end{proof}

\begin{thm}
Let $p\geq 5$ and $k=2$. Assume that 
\begin{enumerate}
\item $l_1, l_2, l_3$, and $l_4$ are all non-zero, and 
\item $p\equiv 1\pmod{4}$.
\end{enumerate} 
Then $D_{n,k}(1,x)$ is not a PP of $\f_{p^e}$.
\end{thm}

\begin{proof}
Since $l_1, l_2, l_3$, and $l_4$ are all non-zero, we have $h(0)=0$. The proof follows from the fact that $-1$ is a quadratic residue modulo $p$ whenever $p\equiv 1\pmod{4}$, and 
$$h(-1)=0=h(0).$$
\end{proof}

\begin{rmk}
The case in which exactly three of $l_1$, $l_2$, $l_3$, and $l_4$ are nonzero is explained in Section 3. 
\end{rmk}



\begin{rmk}
If $k=2$ and $l_1=l_2=l_3=l_4=0$, then $D_{p^{l_1}+p^{l_2}+p^{l_3}+p^{l_4},k}(1,x)$ is a PP of $\f_{p^e}$.
\end{rmk}


\section{A generalization}\label{S6}

In this Section, we consider the case $n=p^{l_1}+p^{l_2}+\cdots +p^{l_i}$, where $l_1, \ldots, l_i$ are non-negative integers. 

For any $i\in \mathbb{Z}^+$, we have
\begin{equation}\label{E61}
\begin{split}
&(1+u)^{p^{l_1}+p^{l_2}+\cdots +p^{l_i}}\cr
&=u^{p^{l_1}+p^{l_2}+\cdots +p^{l_i}}+\displaystyle\sum_{j_1, j_2, \ldots, j_{i-1}\in \{l_1, l_2, \ldots, l_i\}}\,u^{p^{j_1}+p^{j_2}+ \dots +p^{j_{i-1}}}+\cdots + \displaystyle\sum_{j_1, j_2 \in \{l_1, l_2, \ldots, l_i\}}\,u^{p^{j_1}+p^{j_2}}\cr
&+\displaystyle\sum_{j_1 \in \{l_1, l_2, \ldots, l_i\}}\,u^{p^{j_1}}+1.
\end{split}
\end{equation}

Let $i$ be odd. Then 

\begin{equation}\label{E62}
\begin{split}
&(1-u)^{p^{l_1}+p^{l_2}+\cdots +p^{l_i}}\cr
&=-u^{p^{l_1}+p^{l_2}+\cdots +p^{l_i}}+\displaystyle\sum_{j_1, j_2, \ldots, j_{i-1}\in \{l_1, l_2, \ldots, l_i\}}\,u^{p^{j_1}+p^{j_2}+ \dots +p^{j_{i-1}}}-\cdots + \displaystyle\sum_{j_1, j_2 \in \{l_1, l_2, \ldots, l_i\}}\,u^{p^{j_1}+p^{j_2}}\cr
&-\displaystyle\sum_{j_1 \in \{l_1, l_2, \ldots, l_i\}}\,u^{p^{j_1}}+1.
\end{split}
\end{equation}

Thus, we have the following when $i$ is odd. 

\begin{equation}\label{E63}
\begin{split}
&(1+u)^{p^{l_1}+p^{l_2}+\cdots +p^{l_i}}+(1-u)^{p^{l_1}+p^{l_2}+\cdots +p^{l_i}}\cr
&=2\displaystyle\sum_{j_1, j_2, \ldots, j_{i-1}\in \{l_1, l_2, \ldots, l_i\}}\,u^{p^{j_1}+p^{j_2}+ \dots +p^{j_{i-1}}}+2\displaystyle\sum_{j_1, j_2, \ldots, j_{i-3}\in \{l_1, l_2, \ldots, l_i\}}\,u^{p^{j_1}+p^{j_2}+ \dots +p^{j_{i-3}}}\cr
&+\cdots + 2\displaystyle\sum_{j_1, j_2 \in \{l_1, l_2, \ldots, l_i\}}\,u^{p^{j_1}+p^{j_2}}+2.
\end{split}
\end{equation}

\begin{equation}\label{E64}
\begin{split}
&(1+u)^{p^{l_1}+p^{l_2}+\cdots +p^{l_i}}\,(1-u)-(1+u)(1-u)^{p^{l_1}+p^{l_2}+\cdots +p^{l_i}}\cr
&=2u^{p^{l_1}+p^{l_2}+\cdots +p^{l_i}}-2\displaystyle\sum_{j_1, j_2, \ldots, j_{i-1}\in \{l_1, l_2, \ldots, l_i\}}\,u^{p^{j_1}+p^{j_2}+ \dots +p^{j_{i-1}}+1}\cr
&+2\displaystyle\sum_{j_1, j_2, \ldots, j_{i-2}\in \{l_1, l_2, \ldots, l_i\}}\,u^{p^{j_1}+p^{j_2}+ \dots +p^{j_{i-2}}}-2\displaystyle\sum_{j_1, j_2, \ldots, j_{i-3}\in \{l_1, l_2, \ldots, l_i\}}\,u^{p^{j_1}+p^{j_2}+ \dots +p^{j_{i-3}}+1}\cr
&+\cdots - 2\displaystyle\sum_{j_1, j_2 \in \{l_1, l_2, \ldots, l_i\}}\,u^{p^{j_1}+p^{j_2}+1}+2\displaystyle\sum_{j_1 \in \{l_1, l_2, \ldots, l_i\}}\,u^{p^{j_1}}-2u.
\end{split}
\end{equation}

Now let $i$ be even. Then 

\begin{equation}\label{E65}
\begin{split}
&(1-u)^{p^{l_1}+p^{l_2}+\cdots +p^{l_i}}\cr
&=u^{p^{l_1}+p^{l_2}+\cdots +p^{l_i}}-\displaystyle\sum_{j_1, j_2, \ldots, j_{i-1}\in \{l_1, l_2, \ldots, l_i\}}\,u^{p^{j_1}+p^{j_2}+ \dots +p^{j_{i-1}}}+\cdots + \displaystyle\sum_{j_1, j_2 \in \{l_1, l_2, \ldots, l_i\}}\,u^{p^{j_1}+p^{j_2}}\cr
&-\displaystyle\sum_{j_1 \in \{l_1, l_2, \ldots, l_i\}}\,u^{p^{j_1}}+1.
\end{split}
\end{equation}

Thus, we have the following when $i$ is even. 

\begin{equation}\label{E66}
\begin{split}
&(1+u)^{p^{l_1}+p^{l_2}+\cdots +p^{l_i}}+(1-u)^{p^{l_1}+p^{l_2}+\cdots +p^{l_i}}\cr
&=2u^{p^{l_1}+p^{l_2}+\cdots +p^{l_i}}+2\displaystyle\sum_{j_1, j_2, \ldots, j_{i-2}\in \{l_1, l_2, \ldots, l_i\}}\,u^{p^{j_1}+p^{j_2}+ \dots +p^{j_{i-2}}}\cr
&+\cdots + 2\displaystyle\sum_{j_1, j_2 \in \{l_1, l_2, \ldots, l_i\}}\,u^{p^{j_1}+p^{j_2}}+2.
\end{split}
\end{equation}

\begin{equation}\label{E67}
\begin{split}
&(1+u)^{p^{l_1}+p^{l_2}+\cdots +p^{l_i}}\,(1-u)-(1+u)(1-u)^{p^{l_1}+p^{l_2}+\cdots +p^{l_i}}\cr
&=-2u^{p^{l_1}+p^{l_2}+\cdots +p^{l_i}+1}+2\displaystyle\sum_{j_1, j_2, \ldots, j_{i-1}\in \{l_1, l_2, \ldots, l_i\}}\,u^{p^{j_1}+p^{j_2}+ \dots +p^{j_{i-1}}}\cr
&-2\displaystyle\sum_{j_1, j_2, \ldots, j_{i-2}\in \{l_1, l_2, \ldots, l_i\}}\,u^{p^{j_1}+p^{j_2}+ \dots +p^{j_{i-2}}+1}+2\displaystyle\sum_{j_1, j_2, \ldots, j_{i-3}\in \{l_1, l_2, \ldots, l_i\}}\,u^{p^{j_1}+p^{j_2}+ \dots +p^{j_{i-3}}}\cr
&+\cdots - 2\displaystyle\sum_{j_1, j_2 \in \{l_1, l_2, \ldots, l_i\}}\,u^{p^{j_1}+p^{j_2}+1}+2\displaystyle\sum_{j_1 \in \{l_1, l_2, \ldots, l_i\}}\,u^{p^{j_1}}-2u.
\end{split}
\end{equation}

Let $n=p^{l_1}+p^{l_2}+\cdots +p^{l_i}$ and $u=2y-1$, where $y\neq \frac{1}{2}$. Then from \eqref{E2.3} we have 

\begin{equation}\label{E700}
\begin{split}
&D_{n,k}(1,y(1-y))\cr
&=k\,\Big[ \displaystyle\frac{\Big(\frac{u+1}{2}\Big)^{p^{l_1}+p^{l_2}+\cdots +p^{l_i}}\Big(\frac{1-u}{2}\Big)-\Big(\frac{u+1}{2}\Big)\Big(\frac{1-u}{2}\Big)^{p^{l_1}+p^{l_2}+\cdots +p^{l_i}}}{u}\Big]\cr
&+\Big(\frac{u+1}{2}\Big)^{p^{l_1}+p^{l_2}+\cdots +p^{l_i}}+\Big(\frac{1-u}{2}\Big)^{p^{l_1}+p^{l_2}+\cdots +p^{l_i}}\cr
\end{split}
\end{equation}

which can be written as

\begin{equation}\label{E711}
\begin{split}
D_{n,k}(1,y(1-y))&=\frac{k}{2^{i+1}}\,\Big[ \displaystyle\frac{(u+1)^{p^{l_1}+p^{l_2}+\cdots +p^{l_i}}(1-u)-(u+1)(1-u)^{p^{l_1}+p^{l_2}+\cdots +p^{l_i}}}{u}\Big]\cr
&+\frac{1}{2^i}\,\Big[(u+1)^{p^{l_1}+p^{l_2}+\cdots +p^{l_i}}+(1-u)^{p^{l_1}+p^{l_2}+\cdots +p^{l_i}}\Big].
\end{split}
\end{equation}

When $y=\frac{1}{2}$, from \eqref{E2.6} we have

\begin{equation}\label{E722}
D_{n,k}\Big(1,\frac{1}{4}\Big)\,=\,\frac{k(p^{l_1}+p^{l_2}+\cdots +p^{l_i}-1)+2}{2^{p^{l_1}+p^{l_2}+\cdots +p^{l_i}}}=\frac{2-k}{2^i}
\end{equation}

for any $i$. Then we have the following:

\vskip 0.1in

\noindent \textbf{Case 1.} Let $i$ be odd and $n=p^{l_1}+p^{l_2}+ \dots +p^{l_i}$. Since $u^2=1-4x$, it follows from \eqref{E63}, \eqref{E64}, \eqref{E711} and \eqref{E722} that for all $x\in \f_q$, we have 

\begin{equation}
\begin{split}
&D_{n,k}(1,x)\cr
&=\frac{k}{2^i}\,(1-4x)^\frac{p^{l_1}+p^{l_2}+ \dots +p^{l_i}-1}{2}+\frac{(2-k)}{2^i}\,\displaystyle\sum_{j_1, j_2, \ldots, j_{i-1}\in \{l_1, l_2, \ldots, l_i\}}\,(1-4x)^\frac{p^{j_1}+p^{j_2}+ \dots +p^{j_{i-1}}}{2}\cr
&+\frac{k}{2^i}\,\displaystyle\sum_{j_1, j_2, \ldots, j_{i-2}\in \{l_1, l_2, \ldots, l_i\}}\,(1-4x)^\frac{p^{j_1}+p^{j_2}+ \dots +p^{j_{i-2}}-1}{2}\cr
&+\frac{(2-k)}{2^i}\,\displaystyle\sum_{j_1, j_2, \ldots, j_{i-3}\in \{l_1, l_2, \ldots, l_i\}}\,(1-4x)^\frac{p^{j_1}+p^{j_2}+ \dots +p^{j_{i-3}}}{2}+\ldots\ldots\cr
&+\frac{(2-k)}{2^i}\,\displaystyle\sum_{j_1, j_2 \in \{l_1, l_2, \ldots, l_i\}}\,(1-4x)^\frac{p^{j_1}+p^{j_2}}{2}+\frac{k}{2^i}\,\displaystyle\sum_{j_1 \in \{l_1, l_2, \ldots, l_i\}}\,(1-4x)^\frac{p^{j_1}-1}{2}+\frac{(2-k)}{2^i}.
\end{split}
\end{equation}

\vskip 0.1in

\noindent \textbf{Case 2.} Let $i$ be even and $n=p^{l_1}+p^{l_2}+ \dots +p^{l_i}$. Since $u^2=1-4x$, it follows from \eqref{E66}, \eqref{E67}, \eqref{E711} and \eqref{E722} that for all $x\in \f_q$, we have 

\begin{equation}\label{even} 
\begin{split}
&D_{n,k}(1,x)\cr
&=\frac{(2-k)}{2^i}\,(1-4x)^\frac{p^{l_1}+p^{l_2}+ \dots +p^{l_i}}{2}+\frac{k}{2^i}\,\displaystyle\sum_{j_1, j_2, \ldots, j_{i-1}\in \{l_1, l_2, \ldots, l_i\}}\,(1-4x)^\frac{p^{j_1}+p^{j_2}+ \dots +p^{j_{i-1}}-1}{2}\cr
&+\frac{(2-k)}{2^i}\,\displaystyle\sum_{j_1, j_2, \ldots, j_{i-2}\in \{l_1, l_2, \ldots, l_i\}}\,(1-4x)^\frac{p^{j_1}+p^{j_2}+ \dots +p^{j_{i-2}}}{2}\cr
&+\frac{k}{2^i}\,\displaystyle\sum_{j_1, j_2, \ldots, j_{i-3}\in \{l_1, l_2, \ldots, l_i\}}\,(1-4x)^\frac{p^{j_1}+p^{j_2}+ \dots +p^{j_{i-3}}-1}{2}+\ldots\ldots\cr
&+\frac{(2-k)}{2^i}\,\displaystyle\sum_{j_1, j_2 \in \{l_1, l_2, \ldots, l_i\}}\,(1-4x)^\frac{p^{j_1}+p^{j_2}}{2}+\frac{k}{2^i}\,\displaystyle\sum_{j_1 \in \{l_1, l_2, \ldots, l_i\}}\,(1-4x)^\frac{p^{j_1}-1}{2}+\frac{(2-k)}{2^i}.
\end{split}
\end{equation}

\subsection{Permutation behaviour of $D_{p^{l_1}+p^{l_2}+\cdots +p^{l_i},k}$}  \hspace*{\fill} \\

We consider the odd $i$ case and even $i$ case separately. 

\vskip 0.1in

\noindent \textbf{Case 1.} Let $i$ be odd and $n=p^{l_1}+p^{l_2}+ \dots +p^{l_i}$. Then

$D_{p^{l_1}+p^{l_2}+ \dots +p^{l_i},k}(1,x)$ is a PP of $\f_{p^e}$ if and only if $h(x)$ is a PP of $\f_{p^e}$, where 

\begin{equation}\label{Oct291} 
\begin{split}
&h(x)=\frac{k}{2^i}\,x^\frac{p^{l_1}+p^{l_2}+ \dots +p^{l_i}-1}{2}+\frac{(2-k)}{2^i}\,\displaystyle\sum_{j_1, j_2, \ldots, j_{i-1}\in \{l_1, l_2, \ldots, l_i\}}\,x^\frac{p^{j_1}+p^{j_2}+ \dots +p^{j_{i-1}}}{2}\cr
&+\frac{k}{2^i}\,\displaystyle\sum_{j_1, j_2, \ldots, j_{i-2}\in \{l_1, l_2, \ldots, l_i\}}\,x^\frac{p^{j_1}+p^{j_2}+ \dots +p^{j_{i-2}}-1}{2}\cr
&+\frac{(2-k)}{2^i}\,\displaystyle\sum_{j_1, j_2, \ldots, j_{i-3}\in \{l_1, l_2, \ldots, l_i\}}\,x^\frac{p^{j_1}+p^{j_2}+ \dots +p^{j_{i-3}}}{2}+\ldots\ldots\cr
&+\frac{(2-k)}{2^i}\,\displaystyle\sum_{j_1, j_2 \in \{l_1, l_2, \ldots, l_i\}}\,x^\frac{p^{j_1}+p^{j_2}}{2}+\frac{k}{2^i}\,\displaystyle\sum_{j_1 \in \{l_1, l_2, \ldots, l_i\}}\,x^\frac{p^{j_1}-1}{2}.
\end{split}
\end{equation}

\noindent \textbf{Case 2.}  Let $i$ be even and $n=p^{l_1}+p^{l_2}+ \dots +p^{l_i}$. Then

$D_{p^{l_1}+p^{l_2}+ \dots +p^{l_i},k}(1,x)$ is a PP of $\f_{p^e}$ if and only if $g(x)$ is a PP of $\f_{p^e}$, where 

\begin{equation}\label{Oct292} 
\begin{split}
g(x)&=\frac{(2-k)}{2^i}\,x^\frac{p^{l_1}+p^{l_2}+ \dots +p^{l_i}}{2}+\frac{k}{2^i}\,\displaystyle\sum_{j_1, j_2, \ldots, j_{i-1}\in \{l_1, l_2, \ldots, l_i\}}\,x^\frac{p^{j_1}+p^{j_2}+ \dots +p^{j_{i-1}}-1}{2}\cr
&+\frac{(2-k)}{2^i}\,\displaystyle\sum_{j_1, j_2, \ldots, j_{i-2}\in \{l_1, l_2, \ldots, l_i\}}\,x^\frac{p^{j_1}+p^{j_2}+ \dots +p^{j_{i-2}}}{2}\cr
&+\frac{k}{2^i}\,\displaystyle\sum_{j_1, j_2, \ldots, j_{i-3}\in \{l_1, l_2, \ldots, l_i\}}\,x^\frac{p^{j_1}+p^{j_2}+ \dots +p^{j_{i-3}}-1}{2}+\ldots\ldots\cr
&+\frac{(2-k)}{2^i}\,\displaystyle\sum_{j_1, j_2 \in \{l_1, l_2, \ldots, l_i\}}\,x^\frac{p^{j_1}+p^{j_2}}{2}+\frac{k}{2^i}\,\displaystyle\sum_{j_1 \in \{l_1, l_2, \ldots, l_i\}}\,x^\frac{p^{j_1}-1}{2}.
\end{split}
\end{equation}

\subsection{Permutation behaviour of $D_{p^{l_1}+p^{l_2},k}$}  \hspace*{\fill} \\

Let $i=2$ in \eqref{even}. Then

$$D_{p^{l_1}+p^{l_2},k}(1,x)=\frac{(2-k)}{4}\,(1-4x)^\frac{p^{l_1}+p^{l_2}}{2}+\frac{k}{4}\,(1-4x)^\frac{p^{l_1}-1}{2}+\frac{k}{4}\,(1-4x)^\frac{p^{l_2}-1}{2}+\frac{(2-k)}{4}.$$

Thus, we have the following results. 

\begin{cor}
Let $k=0$. Then $D_{p^{l_1}+p^{l_2},k}(1,x)$ is a PP of $\f_{p^e}$ if and only if \\ $\textnormal{gcd}(\frac{p^{l_1}+p^{l_2}}{2},p^e-1)=1$. 
\end{cor}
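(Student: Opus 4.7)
The plan is to reduce the problem directly to a monomial permutation question via the closed form displayed immediately above the corollary. Setting $k=0$ in that expression collapses the two middle summands and simplifies the leading and constant coefficients, giving
\[
D_{p^{l_1}+p^{l_2},0}(1,x)=\tfrac{1}{2}\,(1-4x)^{\frac{p^{l_1}+p^{l_2}}{2}}+\tfrac{1}{2}.
\]
So the first step is just to substitute $k=0$ and simplify.

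Next, I would observe that the affine maps $x\mapsto 1-4x$ and $z\mapsto \tfrac{1}{2}z+\tfrac{1}{2}$ are both permutations of $\f_{p^e}$: since $p$ is odd, $2$ and $4$ are units in $\f_{p^e}$, so these maps are bijections. Consequently, $D_{p^{l_1}+p^{l_2},0}(1,x)$ permutes $\f_{p^e}$ if and only if the monomial $y^{(p^{l_1}+p^{l_2})/2}$ permutes $\f_{p^e}$.

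The final step is to invoke Lemma~\ref{L2.1}: the monomial $y^n$ is a PP of $\f_q$ if and only if $\gcd(n,q-1)=1$. Taking $n=\tfrac{p^{l_1}+p^{l_2}}{2}$ and $q=p^e$ yields exactly the stated condition $\gcd\!\left(\tfrac{p^{l_1}+p^{l_2}}{2},\,p^e-1\right)=1$.

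There is no real obstacle here — the corollary is essentially a one-line consequence of the case $i=2$ formula combined with Lemma~\ref{L2.1}. The only thing worth noting carefully is that $\tfrac{p^{l_1}+p^{l_2}}{2}$ is indeed an integer (since $p$ is odd, $p^{l_1}+p^{l_2}$ is even) so the exponent makes sense in $\f_{p^e}$, and that the affine transformations used to pass between $D_{p^{l_1}+p^{l_2},0}(1,x)$ and the monomial do not change the permutation property.
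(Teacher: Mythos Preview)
Your proof is correct and follows exactly the approach the paper intends: the corollary is stated without proof, as an immediate consequence of the displayed formula for $D_{p^{l_1}+p^{l_2},k}(1,x)$ with $k=0$ together with Lemma~\ref{L2.1}. Your write-up simply makes explicit the obvious reduction via the affine substitutions, which the paper leaves to the reader.
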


\begin{cor}
Let $p=3$ and $k=2$. Assume that both $l_1$ and $l_2$ are odd. Then $D_{p^{l_1}+p^{l_2},k}(1,x)$ is a PP of $\f_{p^e}$ if and only if the binomial $x^\frac{p^{l_1}-1}{2}+x^\frac{p^{l_2}-1}{2}$ is a PP of $\f_q$. 
\end{cor}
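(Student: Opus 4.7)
The plan is to specialize the displayed formula for $D_{p^{l_1}+p^{l_2},k}(1,x)$ stated just before the corollary, substitute $p=3$ and $k=2$, and then strip away units and an affine change of variable. With $k=2$ the coefficient $\frac{2-k}{4}$ is zero, so both the leading term $(1-4x)^{(p^{l_1}+p^{l_2})/2}$ and the additive constant vanish, leaving
\[
D_{3^{l_1}+3^{l_2},2}(1,x) \;=\; \tfrac{1}{2}\bigl[(1-4x)^{(3^{l_1}-1)/2} + (1-4x)^{(3^{l_2}-1)/2}\bigr].
\]

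Since $\tfrac{1}{2}$ is a unit in $\f_{3^e}$, multiplication by it preserves the permutation property, and adding a constant would as well. Thus $D_{3^{l_1}+3^{l_2},2}(1,x)$ is a PP of $\f_{3^e}$ if and only if
\[
(1-4x)^{(3^{l_1}-1)/2} + (1-4x)^{(3^{l_2}-1)/2}
\]
is. Now the affine map $x\mapsto 1-4x$ is a bijection of $\f_{3^e}$ (its linear coefficient is nonzero in characteristic $3$), and pre-composition with a bijection preserves the PP property. Setting $y=1-4x$ therefore gives the equivalent condition that the binomial $y^{(3^{l_1}-1)/2} + y^{(3^{l_2}-1)/2}$ is a PP of $\f_{3^e}$, which is exactly the statement after renaming $y$ to $x$.

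The assumption that both $l_1$ and $l_2$ are odd does not enter the algebraic reduction in any essential way; it mainly ensures the exponents $(3^{l_i}-1)/2$ are positive, excluding the degenerate collapse to a constant when some $l_i=0$, and aligns the statement with the parity-based case split used in earlier theorems of this paper and in the predecessor. There is no real obstacle to the argument, since it is a one-line specialization followed by clearing a unit factor and an affine change of variable; the only modest point to verify is that the formula taken from Subsection 6.2 is indeed valid in characteristic $3$, which is the case because its derivation only required $p$ to be an odd prime.
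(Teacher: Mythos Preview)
Your argument is correct and is exactly the implicit proof the paper intends: the corollary is stated without proof immediately after the displayed formula for $D_{p^{l_1}+p^{l_2},k}(1,x)$, and your specialization $k=2$, removal of the unit factor $\tfrac{1}{2}$, and affine substitution $x\mapsto 1-4x$ is precisely the intended reasoning. Your side remark about the role of the parity hypothesis is slightly imprecise (oddness excludes more than just $l_i=0$; the remark following the corollary shows the same reduction holds in the even and mixed-parity cases, where the binomial simply fails to be a PP), but this does not affect the validity of the proof.
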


\begin{rmk}
Let $g(x)=x^\frac{p^{l_1}-1}{2}+x^\frac{p^{l_2}-1}{2}$, $p=3$ and $k=2$. When both $l_1$ and $l_2$ are even, $D_{p^{l_1}+p^{l_2},k}(1,x)$ is not a PP of $\f_{p^e}$ since $g(1)=2=g(-1)$. When $l_1$ and $l_2$ have different parity, $D_{p^{l_1}+p^{l_2},k}(1,x)$ is not a PP of $\f_{p^e}$ since $g(0)=0=g(-1)$. 
\end{rmk}

\begin{thm}
Let $p>3$ and $k=2$. Then $D_{p^{l_1}+p^{l_2},k}(1,x)$ is not a PP of $\f_{p^e}$. 
\end{thm}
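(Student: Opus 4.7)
The plan is to substitute $k=2$ into the explicit formula for $D_{p^{l_1}+p^{l_2},k}(1,x)$ displayed just above the statement, reduce to a simple two-term polynomial, and then exhibit a large set of elements on which it takes a single value. Setting $k=2$ annihilates the coefficients $(2-k)/4$, leaving
$$D_{p^{l_1}+p^{l_2},2}(1,x) \;=\; \tfrac{1}{2}\bigl[(1-4x)^{(p^{l_1}-1)/2}+(1-4x)^{(p^{l_2}-1)/2}\bigr].$$
Because $p>3$, the affine map $x\mapsto 1-4x$ is a bijection of $\f_{p^e}$, so $D_{p^{l_1}+p^{l_2},2}(1,x)$ is a PP of $\f_{p^e}$ if and only if $g(y):=y^{(p^{l_1}-1)/2}+y^{(p^{l_2}-1)/2}$ is a PP of $\f_{p^e}$.

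To see $g$ is not a PP, I would study its behaviour on the subgroup $H=\{y\in\f_{p^e}^*:y^{(p-1)/2}=1\}$ of $(p-1)/2$-th roots of unity. Since $(p-1)\mid p^e-1$, one has $(p-1)/2\mid p^e-1$ and hence $|H|=(p-1)/2$. The key divisibility is $(p-1)\mid p^{l_i}-1$, which yields integers $m_i=1+p+\cdots+p^{l_i-1}$ with $(p^{l_i}-1)/2=m_i\cdot(p-1)/2$. Consequently, for every $y\in H$,
$$y^{(p^{l_i}-1)/2}=\bigl(y^{(p-1)/2}\bigr)^{m_i}=1 \qquad (i=1,2),$$
so $g$ takes the constant value $2$ on all of $H$.

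Since $p>3$ forces $|H|=(p-1)/2\geq 2$, the polynomial $g$ attains the value $2$ at at least two distinct points of $\f_{p^e}^*$ and therefore fails to be injective. This forces $D_{p^{l_1}+p^{l_2},2}(1,x)$ not to be a PP of $\f_{p^e}$. The degenerate case $l_1=l_2=0$ is handled automatically, since $g$ then reduces to the constant polynomial $2$. No real obstacle appears in this argument; the only subtlety is the two-step divisibility bookkeeping that rewrites $(p^{l_i}-1)/2$ as a multiple of $(p-1)/2$, and it is precisely this factorization that makes $g$ constant on a set of size exceeding one.
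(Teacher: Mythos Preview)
Your argument is correct and follows essentially the same route as the paper: both reduce via the affine substitution to $g(y)=y^{(p^{l_1}-1)/2}+y^{(p^{l_2}-1)/2}$ and then show $g$ is constantly $2$ on a set of size $(p-1)/2\ge 2$. Your subgroup $H=\{y:y^{(p-1)/2}=1\}$ is precisely the set of nonzero quadratic residues of $\f_p$ that the paper uses; you have simply made the divisibility $(p-1)/2\mid (p^{l_i}-1)/2$ explicit where the paper asserts $f(a)=2$ without justification.
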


\begin{proof}
Assume that $p>3$ and $k=2$. Then $D_{p^{l_1}+p^{l_2},k}(1,x)$ is a PP of $\f_{p^e}$ if and only if $f(x)=x^\frac{p^{l_1}-1}{2}+x^\frac{p^{l_2}-1}{2}$ is a PP of $\f_{p^e}$. Let $a$ be a quadratic residue of $p$. Then $a=\beta^2$ for some $1\leq \beta \leq p-1$. We have $f(a)=2=f(1)$. Thus $D_{p^{l_1}+p^{l_2},k}(1,x)$ is not a PP of $\f_{p^e}$. 
\end{proof}

\begin{rmk}
Let $k\neq 0, 2$ and $p>3$. If $l_1=l_2=0$, then $D_{p^{l_1}+p^{l_2},k}(1,x)$ is clearly a PP of $\f_{p^e}$ since every linear polynomial is a PP. 
\end{rmk}

\begin{thm}
Let $k\neq 0, 2$ and $p>3$. Assume that $\frac{2k}{(k-2)}$ is a quadratic residue of $p$. If $l_1\neq 0$ and $\l_2\neq 0$, then $D_{p^{l_1}+p^{l_2},k}(1,x)$ is not a PP of $\f_{p^e}$
\end{thm}

\begin{proof}
Let $k\neq 0, 2$ and $p>3$. Then $D_{p^{l_1}+p^{l_2},k}(1,x)$ is a PP of $\f_{p^e}$ if and only if 
$$f(x)=(2-k)\,x^\frac{p^{l_1}+p^{l_2}}{2}+k\,x^\frac{p^{l_1}-1}{2}+k\,x^\frac{p^{l_2}-1}{2}$$
is a PP of $\f_{p^e}$. Since $l_1$ and $l_2$ are not both zero, we have $f(0)=0$. The proof follows from the fact that $$f\Big(\frac{2k}{(k-2)}\Big)=0=f(0).$$
\end{proof}





\section{More results}

\subsection{The polynomial $D_{rp^l+s,k}(1,x)$}\label{case3}  \hspace*{\fill} \\

In this section, we consider the case $n=rp^l+s$ where $r\geq 1$ and $l, s\geq 0$ are integers.

\begin{prop}\label{P31}
Suppose that $r+s$ is even.  Then $D_{rp^l+s,k}(1,x)$ permutes $\f_{p^e}$ if and only if
\begin{equation*}
\Small{
\begin{split}
&g(x)=(2-k)x^{\frac{rp^l+s}{2}}+k\left[\binom{r}{r}\binom{s}{s-1}\,x^{\frac{rp^l+(s-2)}{2}}+\binom{r}{r-1}\binom{s}{s}\,x^{\frac{(r-1)p^l+(s-1)}{2}}\right]\cr
&+(2-k)\left[\binom{r}{r}\binom{s}{s-2}\,x^{\frac{rp^l+(s-2)}{2}}+\binom{r}{r-1}\binom{s}{s-1}\,x^{\frac{(r-1)p^l+(s-1)}{2}}+\binom{r}{r-2}\binom{s}{s}\,x^{\frac{(r-2)p^l+s}{2}}\right]\cr
&+k\left[\binom{r}{r}\binom{s}{s-3}\,x^{\frac{rp^l+(s-4)}{2}}+\binom{r}{r-1}\binom{s}{s-2}\,x^{\frac{(r-1)p^l+(s-3)}{2}}+\binom{r}{r-2}\binom{s}{s-1}\,x^{\frac{(r-2)p^l+(s-2)}{2}}\right.\cr
&\left. +\binom{r}{r-3}\binom{s}{s}\,x^{\frac{(r-3)p^l+(s-1)}{2}}\right]+\ldots\ldots\cr
&+(2-k)\left[\binom{r}{1}\binom{s}{1}x^{\frac{p^l+1}{2}}+\binom{r}{2}\binom{s}{0}x^{p^l}+\binom{r}{0}\binom{s}{2}x\right]+k\left[\binom{r}{1}\binom{s}{0}x^{\frac{p^l-1}{2}}+\binom{r}{0}\binom{s}{1}\right]+(2-k)
\end{split}
}
\end{equation*}
permutes $\f_{p^e}$.
\end{prop}

\begin{proof}
Let $n=p^{l_1}+p^{l_2}+\cdots +p^{l_i}$ with  $i=r+s$ in \eqref{Oct292}. Then the proof follows by letting $l_1=\cdots =l_r=l$ and $l_{r+1}=\cdots =l_{r+s}=0$ and by counting the number of choices for $j_i$'s.
\end{proof}

\begin{prop}\label{P32}
Suppose that $r+s$ is odd.  Then $D_{rp^l+s,k}(1,x)$ permutes $\f_{p^e}$ if and only if
\begin{equation*}
\Small{
\begin{split}
&h(x)
=kx^{\frac{rp^l+(s-1)}{2}}+(2-k)\left[\binom{r}{r}\binom{s}{s-1}\,x^{\frac{rp^l+(s-1)}{2}}+\binom{r}{r-1}\binom{s}{s}\,x^{\frac{(r-1)p^l+s}{2}}\right]\cr
&+k\left[\binom{r}{r}\binom{s}{s-2}\,x^{\frac{rp^l+(s-3)}{2}}+\binom{r}{r-1}\binom{s}{s-1}\,x^{\frac{(r-1)p^l+(s-2)}{2}}+\binom{r}{r-2}\binom{s}{s}\,x^{\frac{(r-2)p^l+(s-1)}{2}}\right]\cr
&+(2-k)\left[\binom{r}{r}\binom{s}{s-3}\,x^{\frac{rp^l+(s-3)}{2}}+\binom{r}{r-1}\binom{s}{s-2}\,x^{\frac{(r-1)p^l+(s-2)}{2}}+\binom{r}{r-2}\binom{s}{s-1}\,x^{\frac{(r-2)p^l+(s-1)}{2}}\right.\cr
&\left.+\binom{r}{r-3}\binom{s}{s}\,x^{\frac{(r-3)p^l+s}{2}}\right]+\ldots\ldots\cr
&+(2-k)\left[\binom{r}{1}\binom{s}{1}x^{\frac{p^l+1}{2}}+\binom{r}{2}\binom{s}{0}x^{p^l}+\binom{r}{0}\binom{s}{2}x\right]+k\left[\binom{r}{1}\binom{s}{0}x^{\frac{p^l-1}{2}}+\binom{r}{0}\binom{s}{1}\right]+(2-k)
\end{split}
}
\end{equation*}
permutes $\f_{p^e}$.
\end{prop}

\begin{proof}
Let $n=p^{l_1}+p^{l_2}+\cdots +p^{l_i}$ with  $i=r+s$ in \eqref{Oct291} . Then the proof follows by letting $l_1=\cdots =l_r=l$ and $l_{r+1}=\cdots =l_{r+s}=0$ and by counting the number of choices for $j_i$'s.
\end{proof}

\subsection{The polynomial $D_{p^{l}+s,k}(1,x)$}\label{case1}  \hspace*{\fill} \\

In this subsection, we consider the case when $n=p^l+s$ where $l, s\geq 0$. We also explain a few cases in which $D_{p^{l}+s,k}(1,x)$ is not a permutation polynomial of $\f_{p^e}$. 

\begin{prop}\label{meven}
Let $s$ be even. Then $D_{p^{l}+s,k}(1,x)$ permutes $\f_{p^e}$ if and only if
\begin{equation}\label{NF1}
\begin{split}
f(x)=&\displaystyle\sum_{i=0}^{\frac{s}{2}}\left[\binom{s}{s-2i}\,k+\,\binom{s}{s-(2i+1)}\,(2-k)\right]\,x^\frac{p^l+s-2i-1}{2}\cr
&+\displaystyle\sum_{i=0}^{\frac{s}{2}}\left[\binom{s}{s-2i+1}\,k+\,\binom{s}{s-2i}\,(2-k)\right]\,x^\frac{s-2i}{2}
\end{split}
\end{equation}
permutes $\f_{p^e}$.
\end{prop}

\begin{proof} Let $r=1$ in Proposition~\ref{P32}.
\end{proof}

\begin{cor}
Let $s$ be even with $2^{s+1}\equiv s+1\pmod{p}$. Then $D_{p^{l}+s,1}(1,x)$ is not a permutation polynomial of $\f_{p^e}$.
\end{cor}

\begin{proof}
Let $k=1$. The proof follows from the fact that $f(0)=s+1$, $f(1)=2^{s+1}$, and $2^{s+1}\equiv s+1\pmod{p}$. 
\end{proof}

\begin{cor}
Let $s$ be even with $2^{s}\equiv s\pmod{p}$. Then $D_{p^{l}+s,2}(1,x)$ is not a permutation polynomial of $\f_{p^e}$.
\end{cor}

\begin{proof}
Let $k=2$. The proof follows from the fact that $f(0)=2s$, $f(1)=2^{s+1}$, and $2^{s}\equiv s\pmod{p}$. 
\end{proof}

\begin{cor}
Let $s$ be even with $s\equiv 0\pmod{p-1}$. Then $D_{p^{l}+s,0}(1,x)$ is not a permutation polynomial  of $\f_{p^e}$.
\end{cor}

\begin{proof}
Let $k=0$. The proof follows from the fact that $f(0)=2$, $f(1)=2^{s+1}$, and $s\equiv 0\pmod{p-1}$. 
\end{proof}

\begin{prop}\label{modd}
Let $s$ be odd. Then $D_{p^{l}+s,k}(1,x)$ permutes  $\f_{p^e}$ if and only if
\begin{equation}\label{NF2}
\begin{split}
h(x)=&\displaystyle\sum_{i=0}^{\frac{s+1}{2}}\left[\binom{s}{s-2i+1}\,k+\,\binom{s}{s-2i}\,(2-k)\right]\,x^\frac{p^l+s-2i}{2}\cr
&+\displaystyle\sum_{i=0}^{\frac{s+1}{2}}\left[\binom{s}{s-2i}\,k+\,\binom{s}{s-(2i+1)}\,(2-k)\right]\,x^\frac{s-2i-1}{2}
\end{split}
\end{equation}
permutes $\f_{p^e}$.
\end{prop}

\begin{proof} Let $r=1$ in Proposition~\ref{P31}. 
\end{proof}

\begin{cor}
Let $s$ be odd with $2^{s+1}\equiv s+1\pmod{p}$. Then $D_{p^{l}+s,1}(1,x)$ is not a permutation polynomial of $\f_{p^e}$.
\end{cor}

\begin{proof}
Let $k=1$. The proof follows from the fact that $h(0)=s+1$, $h(1)=2^{s+1}$, and $2^{s+1}\equiv s+1\pmod{p}$. 
\end{proof}

\begin{cor}
Let $s$ be odd with $2^{s}\equiv s\pmod{p}$. Then $D_{p^{l}+s,2}(1,x)$ is not a permutation polynomial of $\f_{p^e}$.
\end{cor}

\begin{proof}
Let $k=2$. The proof follows from the fact that $h(0)=2s$, $h(1)=2^{s+1}$, and $2^{s}\equiv s\pmod{p}$. 
\end{proof}

\subsection{The polynomial $D_{rp^{l},k}(1,x)$}\label{case2}  \hspace*{\fill} \\

In this subsection, we consider the case when $n=rp^l$ with $r\geq 1$ and $l\geq 0$. We also explain a few cases in which $D_{rp^l,k}(1,x)$ is not a permutation polynomial of $\f_{p^e}$.

\begin{prop}
Let $r$ be odd. Then $D_{rp^l,k}(1,x)$ permutes $\f_{p^e}$ if and only if
\begin{equation}\label{E5.1}
\begin{split}
f^*(x)= &k\,x^{\frac{rp^l-1}{2}}+(2-k)\,\binom{r}{r-1}\,x^{\frac{(r-1)p^l}{2}}+k\,\binom{r}{r-2}\,x^{\frac{(r-2)p^l-1}{2}}\cr
& +(2-k)\,\binom{r}{r-3}\,x^{\frac{(r-3) p^l}{2}}+\cdots+(2-k)\,\binom{r}{2}\,x^{\frac{2p^l}{2}}+k\,\binom{r}{1}\,x^{\frac{p^l-1}{2}}
\end{split}
\end{equation}
permutes $\f_{p^e}$.
\end{prop}
\begin{proof}
Let $s=0$ in Proposition~\ref{P32}. Then we have
\[
\begin{split}
f(x)=& k\,x^{\frac{rp^l-1}{2}}+(2-k)\,\binom{r}{r-1}\,x^{\frac{(r-1) p^l}{2}}+k\,\binom{r}{r-2}\,x^{\frac{(r-2) p^l-1}{2}}\cr
&+(2-k)\,\binom{r}{r-3}\,x^{\frac{(r-3) p^l}{2}}+\cdots+(2-k)\,\binom{r}{2}\,x^{\frac{2 p^l}{2}}+k\,\binom{r}{1}\,x^{\frac{p^l-1}{2}}+\binom{r}{0}\,(2-k).
\end{split}
\]
\end{proof}

\begin{cor}
Let $r$ be odd with $r\equiv 1\pmod{p-1}$. Then $D_{rp^l,0}(1,x)$ is not a permutation polynomial of $\f_{p^e}$.
\end{cor}

\begin{proof}
Let $k=0$. We note that $f^*(0)=0$ and
\[
\begin{split}
f^*(1)=2\left[\binom{r}{r-1}+\binom{r}{r-3}+\binom{r}{r-5}+\cdots +\binom{r}{2}\right]=2(2^{r-1}-1).
\end{split}
\]
Since $r\equiv 1\pmod{p-1}$, we conclude that $f^*(1)=0$ and thus $D_{rp^l,0}(1,x)$ is not a permutation polynomial of $\f_{p^e}$.
\end{proof}

\begin{prop}
Let $r$ be even. Then $D_{rp^l,k}(1,x)$ permutes $\f_{p^e}$ if and only if
\begin{equation*}\label{E5.2}
\begin{split}
h^*(x)=&\;(2-k)\,x^{\frac{rp^l}{2}}+k\,\binom{r}{r-1}\,x^{\frac{(r-1)p^l-1}{2}}+(2-k)\,\binom{r}{r-2}\,x^{\frac{(r-2)p^l}{2}}\cr
&\;+k\,\binom{r}{r-3}\,x^{\frac{(r-3)p^l-1}{2}}+\cdots+(2-k)\,\binom{r}{2}\,x^{\frac{2p^l}{2}}+k\,\binom{r}{1}\,x^{\frac{p^l-1}{2}}
\end{split}
\end{equation*}
permutes $\f_{p^e}$.
\end{prop}
\begin{proof}

Let $s=0$ in Proposition~\ref{P31}. Then we have
\[
\begin{split}
&h(x)=(2-k)\,x^{\frac{rp^l}{2}}+k\,\binom{r}{r-1}\,x^{\frac{(r-1)p^l-1}{2}}+(2-k)\,\binom{r}{r-2}\,x^{\frac{(r-2)p^l}{2}}\cr
&+k\,\binom{r}{r-3}\,x^{\frac{(r-3)p^l-1}{2}}+\cdots+(2-k)\,\binom{r}{2}\,x^{\frac{2 p^l}{2}}+k\,\binom{r}{1}\,x^{\frac{p^l-1}{2}}+\binom{r}{0}\,(2-k).
\end{split}
\]
\end{proof}

\begin{cor}
Let $r$ be even with $r\equiv 0\pmod{p-1}$. Then $D_{rp^l,1}(1,x)$ is not a permutation polynomial of $\f_{p^e}$.
\end{cor}

\begin{proof}
Let $k=1$. We note that $h^*(0)=0$ and
\[
\begin{split}
h^*(1)=2\left[\binom{r}{r}+\binom{r}{r-1}+\cdots +\binom{r}{1}\right]=2(2^{r}-1).
\end{split}
\]
Since $r\equiv 0\pmod{p-1}$, we have that $h^*(1)=0$ and thus $D_{rp^l,1}(1,x)$ is not a permutation polynomial of $\f_{p^e}$.
\end{proof}


\section{Some generalizations, arithmetic properties and matrix form}

\subsection{The polynomial $D_{rp^{l}-1,k}(1,x)$}\label{more}  \hspace*{\fill} \\

In this subsection, we present a generalization of the following result on reversed Dickson polynomials of the second kind due to Hong, Qin and Zhao.

\begin{prop}\cite[Proposition~2.3]{Hong-Qin-Zhao-FFA-2016-2}\label{HQZ}
Let  $l$ be a positive integer. Then  $D_{p^l-1,1}(1,x)=(1-4x)^{\frac{p^l-1}{2}}$.
\end{prop}

\begin{prop}\label{gen}
Let  $r$ and $l$ be a positive integers with $r\ne 0$. Then
\begin{equation*}\label{TR1}
\begin{split}
D_{rp^l-1,k}(1,x)&=\frac{k}{2^{r-1}}\,\displaystyle\sum_{\mathclap{\substack{ 1\le j\le r \\ j-\textnormal{odd}}}}\,\binom{r}{j}\,(1-4x)^{\frac{j p^l-1}{2}}-\frac{k-1}{2^{r-2}}\,\displaystyle\sum_{\mathclap{\substack{0\leq j\le rp^l-1\\ j-\textnormal{even}}}}\,\binom{rp^l-1}{j}\,(1-4x)^{\frac{j}{2}}.
\end{split}
\end{equation*}
\end{prop}
\begin{proof}
When $x=y(y-1)$ with $y\neq 1/2$ and $n=rp^l-1$ with $r\geq 1$, we obtain from~\eqref{E2.3} that

\begin{equation*}
\begin{split}
D_{rp^l-1,k}(1,x)&=k\,\left[ \displaystyle\frac{y^{rp^l}-(1-y)^{rp^l}}{2y-1}\right]-(k-1)[y^{rp^l-1}+(1-y)^{rp^l-1}].
\end{split}
\end{equation*}

\noindent Let $u=2y-1$. Then

\begin{equation*}\label{NN1}
\begin{split}
D_{rp^l-1,k}(1,x)&=k\,\left[ \displaystyle\frac{\left(\frac{1+u}{2}\right)^{rp^l}-\left(\frac{1-u}{2}\right)^{rp^l}}{u}\right]-(k-1)\left[\left(\frac{1+u}{2}\right)^{rp^l-1}+\left(\frac{1-u}{2}\right)^{rp^l-1}\right]\cr
&=\frac{k}{2^r}\,\left[ \displaystyle\frac{(1+u)^{rp^l}-(1-u)^{rp^l}}{u}\right]-\frac{k-1}{2^{r-1}}\left[(1+u)^{rp^l-1}+(1-u)^{rp^l-1}\right].\cr
\end{split}
\end{equation*}

\noindent For any $r$, we have that

\begin{equation*}\label{NN2}
\begin{split}
(1+u)^{rp^l}-(1-u)^{rp^l}&=(1+u^{p^l})^{r}-(1-u^{p^l})^{r}\cr
&=\displaystyle\sum_{j=0}^{r}\,\binom{r}{j}\,[(u^{p^l})^j-(-u^{p^l})^j] \cr
&=2\,\displaystyle\sum_{\mathclap{\substack{1\le j\le r\\ j-\textnormal{odd}}}}\,\binom{r}{j}\,u^{j p^l}
\end{split}
\end{equation*}

\noindent and
\begin{equation*}\label{NN3}
\begin{split}
(1+u)^{rp^l-1}+(1-u)^{rp^l-1}&=\displaystyle\sum_{j=0}^{rp^l-1}\,\binom{rp^l-1}{j}\,[u^j+(-u)^j] \cr
&=2\,\displaystyle\sum_{\mathclap{\substack{0\leq j\le rp^l-1\\ j-\textnormal{even}}}}\,\binom{rp^l-1}{j}\,u^j.
\end{split}
\end{equation*}

\noindent It follows that

\begin{equation*}\label{NN4}
\begin{split}
D_{rp^l-1,k}(1,x)&=\frac{k}{2^{r-1}}\,\displaystyle\sum_{\mathclap{\substack{1\le j\le r \\ j-\textnormal{odd}}}}\,\binom{r}{j}\,u^{jp^l-1}-\frac{k-1}{2^{r-2}}\,\displaystyle\sum_{\mathclap{\substack{0\leq j\le rp^l-1\\ j-\textnormal{even}}}}\,\binom{rp^l-1}{j}\,u^j\cr
&=\frac{k}{2^{r-1}}\,\displaystyle\sum_{\mathclap{\substack{1\le j\le r\\ j-\textnormal{odd}}}}\,\binom{r}{j}\,(u^2)^{\frac{j p^l-1}{2}}-\frac{k-1}{2^{r-2}}\,\displaystyle\sum_{\mathclap{\substack{0\leq j\le rp^l-1\\ j-\textnormal{even}}}}\,\binom{rp^l-1}{j}\,(u^2)^{\frac{j}{2}}.
\end{split}
\end{equation*}

\noindent Since $u^2=1-4x$,  we have

\begin{equation*}\label{NN5}
\begin{split}
D_{rp^l-1,k}(1,x)&=\frac{k}{2^{r-1}}\,\displaystyle\sum_{\mathclap{\substack{1\le j\le r\\ j-\textnormal{odd}}}}\,\binom{r}{j}\,(1-4x)^{\frac{jp^l-1}{2}}-\frac{k-1}{2^{r-2}}\,\displaystyle\sum_{\mathclap{\substack{0\leq j\le rp^l-1\\ j-\textnormal{even}}}}\,\binom{rp^l-1}{j}\,(1-4x)^{\frac{j}{2}}.
\end{split}
\end{equation*}

\noindent When $y=1/2$, i.e. $x=1/4$, we have

\[
\begin{split}
D_{rp^l-1,k}\left(1,\frac{1}{4}\right)&=\frac{k(rp^l-1-1)+2}{2^{rp^l-1}} =\frac{1-k}{2^{r-2}}\cr
&=\frac{k}{2^{r-1}}\,\displaystyle\sum_{\mathclap{\substack{1\le j\le r\\ j-\textnormal{odd}}}}\,\binom{r}{j}\,(1-4x)^{\frac{j p^l-1}{2}}-\frac{k-1}{2^{r-2}}\,\displaystyle\sum_{\mathclap{\substack{0\leq j\le rp^l-1\\ j-\textnormal{even}}}}\,\binom{rp^l-1}{j}\,(1-4x)^{\frac{j}{2}}.
\end{split}
\]
This ends the proof.
\end{proof}

\begin{rmk}
\noindent Let $r=1$. By Proposition~\ref{gen}, since $\displaystyle\binom{p^l-1}{j}\equiv 1\pmod{p}$ for even $j$, we get
\[
D_{p^l-1,k}(1,x)=k\,(1-4x)^{\frac{p^l-1}{2}}-2(k-1)\,\displaystyle\sum_{j=0}^{\frac{p^l-1}{2}}\,(1-4x)^{j}
\]
\noindent for all $x\in \f_{p^e}$. By considering $k=1$, we immediately obtain Proposition~\ref{HQZ}.
\end{rmk}

\subsection{$D_{n,k}(1, 1-x^2)$}  \hspace*{\fill} \\

In this subsection, we generalize a result in \cite{Hou-Mullen-Sellers-Yucas-FFA-2009}. Let $x_1\neq x_2$ and consider 

\begin{equation}\label{E4.6}
\begin{split}
D_{n,k}(x_1+x_2, x_1x_2)&=k\,\Big[ \displaystyle\frac{x_1^nx_2-x_1x_2^n}{x_1-x_2}\Big]+D_n(x_1+x_2, x_1x_2).
\end{split}
\end{equation}

When $x_1=y$ with $y\neq \frac{1}{2}$ and $x_2=1-x_1$, we have \eqref{E2.3}.

\begin{lem}
Let $l$ be a positive odd integer and let $n=\frac{3^l+1}{2}$. Then in $\f_3[x]$, 
$$D_{n,k}(1, 1-x^2)=\Big(\frac{k}{2}-1\Big)\,D_n(x,1)\,+\,\frac{k}{2}\,\frac{D_{n-1}(x,1)}{x}.$$
\end{lem}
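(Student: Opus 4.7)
The natural starting point is to specialize equation (2.3) to $y(1-y) = 1-x^2$. Taking $y = (1+x)/2$ gives $1-y = (1-x)/2$, $2y-1 = x$, and (using $4 = 1$ in $\f_3$) $y(1-y) = 1-x^2$. Writing $a_n := (1+x)^n + (1-x)^n$ and $b_n := (1+x)^n - (1-x)^n$, a short expansion yields
\[
D_n(1, 1-x^2) \;=\; \frac{a_n}{2^n}, \qquad \frac{y^n(1-y)-y(1-y)^n}{2y-1} \;=\; \frac{b_n - x\,a_n}{2^{n+1}\,x}.
\]
Since $l$ is odd, $n = (3^l+1)/2$ is even, so $2^n = 1$ in $\f_3$; all powers of $2$ collapse and (2.3) simplifies to $D_{n,k}(1,1-x^2) = (k/2)(b_n/x - a_n) + a_n$ in $\f_3[x]$. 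By the preceding lemma, $a_n = 2^n D_n(1, 1-x^2) = -D_n(x,1)$, so the identity reduces to proving $b_n = D_{n-1}(x,1)$ in $\f_3[x]$.

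This is the main new step and the chief obstacle. I would parametrize $x = t + t^{-1}$, so that $D_m(x,1) = t^m + t^{-m}$ for every $m$. The characteristic-3 identities $t^3 \pm 1 = (t \pm 1)^3$ give
\[
1+x \;=\; \frac{(t-1)^2}{t}, \qquad 1-x \;=\; -\frac{(t+1)^2}{t},
\]
and since $n$ is even, $(1\pm x)^n = (t \mp 1)^{2n}/t^n$. Because $2n = 3^l + 1$, Frobenius gives $(t\pm 1)^{2n} = (t^{3^l} \pm 1)(t\pm 1)$. Subtracting, the $t^{3^l+1}$ and constant terms cancel and the remaining cross terms combine (using $-2 = 1$ in $\f_3$) to give
\[
b_n \;=\; \frac{t^{3^l}+t}{t^n} \;=\; t^{3^l - n} + t^{1-n} \;=\; t^{n-1} + t^{-(n-1)} \;=\; D_{n-1}(x,1),
\]
using $3^l - n = (3^l - 1)/2 = n-1$.

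Substituting $a_n = -D_n(x,1)$ and $b_n = D_{n-1}(x,1)$ into the reduction $D_{n,k}(1,1-x^2) = (k/2)(b_n/x - a_n) + a_n$ from the first paragraph yields the stated identity $D_{n,k}(1,1-x^2) = (k/2 - 1) D_n(x,1) + (k/2) D_{n-1}(x,1)/x$. The bulk of the difficulty lies in the computation of $b_n$; once the $t$-parametrization and Frobenius are in place, everything lines up.
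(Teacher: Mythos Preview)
Your proof is correct and follows essentially the same approach as the paper: both apply the functional equation (2.3) to $y(1-y)=1-x^2$, parametrize $x=t+t^{-1}$ (the paper uses $x=y^2+y^{-2}$, i.e.\ $t=y^2$), exploit $2n=3^l+1$ together with Frobenius, and invoke the preceding lemma $D_n(1,1-x^2)=-D_n(x,1)$. The only organizational difference is that you isolate the claim $b_n=D_{n-1}(x,1)$ as a standalone step, whereas the paper carries the whole expression through and finishes with the Dickson recurrence $D_{n+1}=xD_n-D_{n-1}$; the underlying computation is the same.
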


\begin{proof}
Let $x=y^2+y^{-2}$. Since $n$ is even, we have

\[
\begin{split}
D_{n,k}(1, 1-x^2)&= D_{n,k}(1, (2+x)(2-x))\cr
&=k\,\Big[ \displaystyle\frac{(y+y^{-1})^{2n}(2-y^2-\frac{1}{y^2})-(2+y^2+\frac{1}{y^2})(y-y^{-1})^{2n}}{2(y^2+y^{-2})}\Big]-D_n(x,1)\cr
&=k\,\Big[ \displaystyle\frac{(y+y^{-1})^{3^l+1}(2-y^2-\frac{1}{y^2})-(2+y^2+\frac{1}{y^2})(y-y^{-1})^{3^l+1}}{2(y^2+y^{-2})}\Big]-D_n(x,1)\cr
&=k\,\Big[ \displaystyle\frac{2y^{3^l-1}+2y^{-3^l+1}+y^{3^l+3}+y^{-3^l-3}}{2(y^2+y^{-2})}\Big]-D_n(x,1)\cr
&=k\,\Big[ \displaystyle\frac{2(y^2)^\frac{3^l-1}{2}+2(y^2)^\frac{-3^l+1}{2}+(y^2)^\frac{3^l+3}{2}+(y^2)^\frac{-3^l-3}{2}}{2(y^2+y^{-2})}\Big]-D_n(x,1)\cr
&=k\,\Big[ \displaystyle\frac{2(y^2)^{n-1}+2(y^2)^{-(n-1)}+(y^2)^{n+1}+(y^2)^{-(n+1)}}{2(y^2+y^{-2})}\Big]-D_n(x,1)\cr
&=k\,\Big[ \displaystyle\frac{2D_{n-1}(x,1)+D_{n+1}(x,1)}{2\,D_1(x,1)}\Big]-D_n(x,1)\cr
&=k\,\Big[ \displaystyle\frac{xD_{n}(x,1)+D_{n-1}(x,1)}{2x}\Big]-D_n(x,1)\cr
&=\Big(\frac{k}{2}-1\Big)\,D_n(x,1)\,+\,\frac{k}{2}\,\frac{D_{n-1}(x,1)}{x}.
\end{split}
\]

\end{proof}

\begin{rmk}
When $k=0$, we have \cite[Lemma~5.5]{Hou-Mullen-Sellers-Yucas-FFA-2009}.
\end{rmk}

\subsection{$D_{n,k}(1,x)$, $D_{n,0}(1,x)$, $D_{n,1}(1,x)$, and $D_{n,2}(1,x)$}  \hspace*{\fill} \\

In this subsection, we show that $D_{n,k}(1,x)$ can be written in terms of  $D_{n-2,1}(1,x)$ and $D_{n,0}(1,x)$. We also show that $D_{n,k}(1,x)$ can be written in terms of $D_{n-1,2}(1,x)$ and $D_{n,0}(1,x)$. 

Recall 

\begin{equation}\label{E71}
\begin{split}
D_{n,k}(1,y(1-y))&=k\,\Big[ \displaystyle\frac{y^n(1-y)-y(1-y)^n}{2y-1}\Big]+D_n(1,y(1-y)),
\end{split}
\end{equation}

where $y\neq \frac{1}{2}$. 

\eqref{E71} can be rewritten as 

\begin{equation}\label{E72}
\begin{split}
D_{n,k}(1,y(1-y))&=k\,y(1-y)\,\Big[ \displaystyle\frac{y^{n-1}-(1-y)^{n-1}}{2y-1}\Big]+D_n(1,y(1-y)),
\end{split}
\end{equation}

which  gives the following result. 

When $y\neq \frac{1}{2}$,

\[
D_{n,1}(1,y(1-y))=\displaystyle\frac{y^{n+1}-(1-y)^{n+1}}{2y-1}
\]

and

\[
D_{n,2}(1,y(1-y))=\dfrac{y^n-(1-y)^n}{2y-1}
\]

are the functional expressions of the reversed Dickson polynomial of the second kind and third kind, respectively. 

Then from \eqref{E72} we have

\begin{equation}\label{E73}
\begin{split}
D_{n,k}(1,x)&=k\,x\,D_{n-2,1}(1,x)+D_n(1,x), \,\,\,\,n\geq 2
\end{split}
\end{equation}

 and 
 
\begin{equation}\label{E74}
\begin{split}
D_{n,k}(1,x)&=k\,x\,D_{n-1,2}(1,x)+D_n(1,x), \,\,\,\,n\geq 1.
\end{split}
\end{equation}

When $y=\frac{1}{2}$, i.e. $x=\frac{1}{4}$, we have 

\[
D_{n,k}(1,\frac{1}{4})=\displaystyle\frac{k(n-1)+2}{2^n}=\displaystyle{\frac{k}{4}\,\frac{(n-1)}{2^{n-2}}+\frac{2}{2^n}}=\,\displaystyle\frac{k}{4}\,D_{n-2,1}(1,\frac{1}{4})+D_n(1,\frac{1}{4}).
\]

and

\[
D_{n,k}(1,\frac{1}{4})=\displaystyle\frac{k(n-1)+2}{2^n}=\displaystyle{\frac{k}{4}\,\frac{2(n-1)}{2^{n-1}}+\frac{2}{2^n}}=\,\displaystyle\frac{k}{4}\,D_{n-1,2}(1,\frac{1}{4})+D_n(1,\frac{1}{4}).
\]

Thus, for all $x\in \f_q$ we have

\begin{equation}\label{E75}
\begin{split}
D_{n,k}(1,x)&=k\,x\,D_{n-2,1}(1,x)+D_n(1,x),\,\,\,\,n\geq 2
\end{split}
\end{equation}

 and 
 
\begin{equation}\label{E76}
\begin{split}
D_{n,k}(1,x)&=k\,x\,D_{n-1,2}(1,x)+D_n(1,x),\,\,\,\,n\geq 1.
\end{split}
\end{equation}

\subsection{A matrix form of $D_{n,k}(1,x)$}  \hspace*{\fill} \\

In this subsection, we give a matrix representation of $D_{n,k}(1,x)$. A similar matrix representation of the Dickson polynomials of the first kind appeared in \cite[Chapter 2]{Lidl-Mullen-Turnwald-1993}. Using vector notation and the recurrence relation in Proposition~\ref{P2.2} we have

\[
  \begin{matrix}\begin{pmatrix}D_{i+1,k}(1,x), & D_{i+2,k}(1,x) \end{pmatrix}=\end{matrix}
   \begin{pmatrix}D_{i,k}(1,x), & D_{i+1,k}(1,x) \end{pmatrix}
  \begin{pmatrix} 0 & -x \\ 1 & 1 \end{pmatrix} 
\]

\[
  \begin{matrix}\begin{pmatrix}D_{i+2,k}(1,x), & D_{i+3,k}(1,x) \end{pmatrix}=\end{matrix}
   \begin{pmatrix}D_{i,k}(1,x), & D_{i+1,k}(1,x) \end{pmatrix}
  \begin{pmatrix} 0 & -x \\ 1 & 1 \end{pmatrix}^2 
\]

$$\vdots$$

\[
  \begin{matrix}\begin{pmatrix}D_{i+n,k}(1,x), & D_{i+n+1,k}(1,x) \end{pmatrix}=\end{matrix}
   \begin{pmatrix}D_{i,k}(1,x), & D_{i+1,k}(1,x) \end{pmatrix}
  \begin{pmatrix} 0 & -x \\ 1 & 1 \end{pmatrix}^n
\]

Set $i=0$.

\[
  \begin{matrix}\begin{pmatrix}D_{n,k}(1,x), & D_{n+1,k}(1,x) \end{pmatrix}=\end{matrix}
   \begin{pmatrix}D_{0,k}(1,x), & D_{1,k}(1,x) \end{pmatrix}
  \begin{pmatrix} 0 & -x \\ 1 & 1 \end{pmatrix}^n
\]

\[
  \begin{matrix}\begin{pmatrix}D_{n,k}(1,x), & D_{n+1,k}(1,x) \end{pmatrix}=\end{matrix}
   \begin{pmatrix}2-k, & 1\end{pmatrix}
  \begin{pmatrix} 0 & -x \\ 1 & 1 \end{pmatrix}^n
\]

\[
  D_{n,k}(1,x) =
   \begin{pmatrix}2-k, & 1\end{pmatrix}
  \begin{pmatrix} 0 & -x \\ 1 & 1 \end{pmatrix}^n
  \begin{pmatrix} 1 \\ 0 \end{pmatrix}
\]


\section*{Acknowledgements}

The author is heavily indebted to the referees and Michael Zieve for their comments and suggestions which improved the paper significantly. He is grateful to Ariane Masuda for the invaluable discussions and comments. He would also like to thank Anthony Iarrobino for his thoughtful comments on the paper.  


\end{document}